%
%
%

\documentclass[12pt,leqno,twoside]{article}
\usepackage{amssymb}
\usepackage{amsmath}
\usepackage{amsthm,amscd}
\usepackage{a4,indentfirst,latexsym}
\usepackage{graphics}
\usepackage{mathrsfs}
\usepackage{cite,enumitem,graphicx}
\usepackage[colorlinks=true,urlcolor=black,
citecolor=black,linkcolor=black,linktocpage,pdfpagelabels,
bookmarksnumbered,bookmarksopen]{hyperref}
\usepackage[colorinlistoftodos]{todonotes}
\usepackage{color}

\linespread{1.2}
\parskip 4mm
\parindent 7mm \voffset -7mm
\hoffset -2mm \textwidth 168mm \textheight 225mm \oddsidemargin
0mm\evensidemargin 0mm\footnotesep 3mm \hbadness 10000

\newtheorem{thm}{Theorem}[section]
\newtheorem{prop}[thm]{Proposition}
\newtheorem{lem}[thm]{Lemma}

\newtheorem{rem}[thm]{Remark}

\newenvironment{altproof}[1]
{\noindent
	{\em Proof of {#1}}.}
{\nopagebreak\mbox{}\hfill $\Box$\par\addvspace{0.5cm}}

\newcommand\mytop[2]{\genfrac{}{}{0pt}{}{#1}{#2}}

\newcommand{\eps}{\varepsilon}
\newcommand{\ov}{\overline}

\newcommand\supp{\mathrm{supp}}

\def\N{\mathbb{N}}

\def\R{\mathbb{R}}





\newcommand{\cC}{{\mathcal C}}
\newcommand{\cD}{{\mathcal D}}

\newcommand{\cF}{{\mathcal F}}

\newcommand{\cH}{{\mathcal H}}

\newcommand{\cM}{{\mathcal M}}

\newcommand{\cO}{{\mathcal O}}

\newcommand{\vphi}{\varphi}
\newcommand{\al}{\alpha}

\newcommand{\de}{\delta}
\newcommand{\ka}{\kappa}
\newcommand{\la}{\lambda}

\newcommand{\si}{\sigma}

\newcommand{\De}{\Delta}

\newcommand{\Om}{\Omega}
\newcommand{\Si}{\Sigma}

\newcommand{\pa}{\partial}

\newcommand{\intsigma}{{\mathring\Si}}

\newcommand{\wt}{\widetilde}

\newcommand{\beq[1]}{\begin{equation}\label{eq:#1}}
	\newcommand{\eeq}{\end{equation}}

\numberwithin{equation}{section}

\DeclareMathOperator{\dist}{dist}


\begin{document}
	
	\title{The Morse property of limit functions appearing in mean field equations on surfaces with boundary}
	\author{Zhengni Hu\footnote{Supported by CSC No. 202106010046.} \and Thomas Bartsch\footnote{Supported by DFG grant BA 1009/19-1.}}
	
	\maketitle
	
	
	
	
	
	
	\noindent{\bf Abstract:} 
	In this paper, we study the Morse property for functions related to limit functions of mean field equations on a smooth, compact surface $\Si$ with boundary $\pa \Si$. Given a Riemannian metric $g$ on $\Si$ we consider functions of the form 
	\[
	f_g(x) := \sum_{i=1}^m\si_i^2R^g(x_i)+\sum_{\mytop{i,j=1}{i\ne j}}^m\si_i\si_jG^g(x_i,x_j)+h(x_1,\ldots,x_m),
	\]
	where $\si_i \neq 0$ for $i=1,\ldots,m$,  $G^g$ is the Green function of the Laplace-Beltrami operator on $(\Si,g)$ with Neumann boundary conditions, $R^g$ is the corresponding Robin function, and $h \in \cC^{2}(\Si^m,\R)$ is arbitrary. We prove that for any Riemannian metric $g$, there exists a metric $\wt g$ which is arbitrarily close to $g$ and in the conformal class of $g$ such that $f_{\wt g}$ is a Morse function. Furthermore we show that, if all $\si_i>0$, then the set of Riemannian metrics for which $f_g$ is a Morse function is open and dense in the set of all Riemannian metrics.
	
	\noindent{\bf Key Words:}  Green function, Robin function, Morse property, Transversality theorem\\ 
	\noindent {\bf AMS subject classification:} 35J08, 35J25, 53C21, 58J60
	
	\section{Introduction and main results}\label{sec:intro}
	Let $\Si$ be a smooth, compact surface with boundary $\pa\Si$. For a Riemannian metric $g$ on $\Si$ let $G^g:\Si\times\Si\to\R$ be the Green function for the Laplace-Beltrami operator $-\De_g$ with Neumann boundary conditions and mean value $0$; i.e.\ for each $\xi\in\Si$ the function $G^g(\cdot,\xi)$ is the unique solution of
	\[
	\left\{
	\begin{aligned}
		-\De_g G^g(\cdot,\xi)&= \de_\xi-\frac1{|\Si|_g} &&\qquad\text{in $\intsigma := \Si\setminus\pa\Si$,}\\
		\pa_{\nu_g} G^g(\cdot,\xi) &= 0 &&\qquad\text{on $\pa\Si$,}\\ 
		\int_\Si G^g(\cdot,\xi)\,dv_g &= 0.
	\end{aligned}
	\right.
	\]
	Here $\nu_g$ is the unit outward normal on $\pa\Si$, $dv_g$ is the area element of $\Si$, $|\Si|_g=\int_{\Si} dv_g$ and  $\de_{\xi}$ is the Dirac distribution on $\Si$ concentrated at $\xi\in\Si$. Setting $\ka(\xi)=2\pi$ for $\xi\in\intsigma$ and $\ka(\xi)=\pi$ for $\xi\in\pa\Si$, the Robin function $R^g:\Si\to\R$ is defined by
	\[
	R^g(\xi) := \lim_{x\to \xi}\left(G^g(x,\xi) + \frac 1 {\ka(\xi)}\log d_g(x,\xi)\right),
	\]
	where $d_g$ denotes the distance with respect to the metric $g$. Given integers $m\ge l\ge 0$ with $m\ge1$ and real numbers $\si_i\ne0$, $i=1,\dots,m$, we set 
	\[
	X := \intsigma^l\times(\pa\Si)^{m-l}\qquad\text{and}\qquad \De_X := \{x\in X:\text{$x_i=x_j$ for some $i\ne j$}\},
	\]
	and define, for an arbitrary given function $h\in \cC^2(\Si^m, \R)$:
	\begin{equation}\label{eq:f_g}
		f_g:X\setminus\De_X \to \R,\quad f_g(x) = \sum_{i=1}^m\si_i^2R^g(x_i)+\sum_{\mytop{i,j=1}{i\ne j}}^m\si_i\si_jG^g(x_i,x_j)+h(x_1,\cdots,x_m).
	\end{equation}
	We will prove that $f_g$ is a Morse function for a ``generic'' metric $g$ on $\Si$, in a sense that will be made precise below.
	
	Functions of the form \eqref{eq:f_g} play a crucial role in understanding the blow-up behavior of solutions of mean field equations like
	\begin{equation}\label{eq:mf}
		\begin{cases}
			-\De_g u= \lambda\left( \frac{V e^u}{\int_{\Si} Ve^u\, dv_g} - \frac 1 {|\Si|_g}\right) &  \text{ in } \intsigma, \\
			\pa_{\nu_g} u = 0 & \text{ on } \pa \Si, \\
			\int_{\Si} u\, dv_g = 0. &
		\end{cases}
	\end{equation}
	For a compact surface without boundary it has been proven in \cite{Esposito2014singular} that a nondegenerate or more generally, an isolated stable critical point $x=(x_1,\dots,x_m)$ of $f_g$, with $l=m$ and $h(x) = 2 \sum_{i=1}^m\si_i\log V(x_i)$, gives rise to solutions $(\la,u)$ of \eqref{eq:mf} with $\la$ close to $8\pi m$ and such that $u$ blows up as $\la\to8\pi m$ precisely at $x_1,\dots,x_m\in\Si$. Similar results have been obtained in the case of mean field equations on a bounded domain in $\R^2$ with Dirichlet boundary conditions in \cite{ma_convergence_2001, Esposito2005}, and for solutions of Gelfand's problem and steady states of the Keller-Segel system (see \cite{baraket1998construction, gladiali2004some, del_pino_singular_2005, del_pino_collapsing_2006} for instance).
	
	Now we state our main results. We fix an integer $k\ge0$ and  $0<\al<1$, and let $\mathrm{Riem}^{k+2,\al}(\Si)$ be the space of Riemannian metrics of class $\cC^{k+2,\al}$ on $\Si$, i.e.\ the space of symmetric and positive definite sections $\Si\to(T\Si\otimes T\Si)^*$ of class $\cC^{k+2,\al}$. 
	
	\begin{thm}\label{thm:main1}
		If $\si_i\ne0$ for $i=1,\dots,m$ and $\si_1+\dots+\si_m\ne0$ then for any $g\in \mathrm{Riem}^{k+2,\al}(\Si)$ the set 
		\[
		\cM_g^{k+2,\al}(\Si) := \{\psi\in\cC^{k+2,\al}(\Si,\R_+):\text{$f_{\psi g}$ is a Morse function} \}
		\]
		is a residual subset of $\cC^{k+2,\al}(\Si,\R_+)$.
	\end{thm}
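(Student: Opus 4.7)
The plan is to apply the Sard--Smale transversality theorem to the auxiliary map
\[
F:(X\setminus\De_X)\times\cC^{k+2,\al}(\Si,\R_+)\to T^*X,\qquad F(x,\psi):=df_{\psi g}(x),
\]
whose zero set in the $x$-variable is, for each fixed $\psi$, exactly the set of critical points of $f_{\psi g}$. A standard calculation shows that a parameter $\psi$ is a regular value of the projection $\pi:F^{-1}(0)\to\cC^{k+2,\al}(\Si,\R_+)$ if and only if $f_{\psi g}$ is a Morse function, so the set of regular values of $\pi$ coincides with $\cM_g^{k+2,\al}(\Si)$. It therefore suffices to verify that $F$ is of class $\cC^1$, that $0$ is a regular value of $F$, and that $\pi$ is a Fredholm map of index zero; Fredholmness is automatic from the first two properties, since the kernel and cokernel of $d\pi$ both embed into the finite-dimensional space $T_{x_0}X$. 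The analytical weight is thus concentrated on verifying that $dF$ is surjective at every zero of $F$.

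The first ingredient is an explicit conformal transformation law for the Green and Robin functions. Since the Dirichlet energy is conformally invariant in dimension two, for $\tilde g:=\psi g$ the function $H(\cdot,\xi):=G^{\tilde g}(\cdot,\xi)-G^g(\cdot,\xi)$ solves the Neumann problem $-\De_g H=1/|\Si|_g-\psi/|\Si|_{\tilde g}$ with right-hand side independent of $\xi$. Combined with the symmetry and mean-zero normalisation of $G^{\tilde g}$, this forces
\[
G^{\tilde g}(x,\xi)=G^g(x,\xi)+h_\psi(x)+h_\psi(\xi)+K(\psi),\qquad R^{\tilde g}(\xi)=R^g(\xi)+2h_\psi(\xi)+K(\psi)+\tfrac{1}{2\ka(\xi)}\log\psi(\xi),
\]
where $h_\psi$ is the mean-zero solution of the above Neumann problem and $K(\psi)$ is an explicit constant. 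Setting $s:=\si_1+\cdots+\si_m$ and substituting into \eqref{eq:f_g} yields, after cancellation, the clean identity
\[
f_{\psi g}(x)=f_g(x)+s^2\,K(\psi)+2s\sum_{i=1}^m\si_i\,h_\psi(x_i)+\sum_{i=1}^m\frac{\si_i^2}{2\ka(x_i)}\log\psi(x_i),
\]
so the full $\psi$-dependence of $f_{\psi g}$ is encoded by the constant $K(\psi)$, the global function $h_\psi$, and the pointwise $\log\psi$-contributions.

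The decisive step is the surjectivity of the partial derivative $D_\psi F(x_0,\psi_0):\cC^{k+2,\al}(\Si,\R)\to T_{x_0}^*X$ at every zero $(x_0,\psi_0)$ of $F$. Differentiating the identity above in $\psi$ gives
\[
D_\psi F(x_0,\psi_0)[\eta]\big|_i = 2\si_i s\,d\dot h_{\psi_0}[\eta](x_i^0) + \frac{\si_i^2}{2\ka(x_i^0)}\,d(\eta/\psi_0)(x_i^0),
\]
where $\dot h_{\psi_0}[\eta]$ is the linearisation of $\psi\mapsto h_\psi$ at $\psi_0$ in the direction $\eta$. One chooses $\eta$ supported in pairwise disjoint small neighbourhoods of $x_1^0,\dots,x_m^0$ (in boundary charts whenever $x_i^0\in\pa\Si$). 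The second summand then localises at $x_i^0$ and depends only on the (tangential) $1$-jet of $\eta$ at $x_i^0$, which can be prescribed freely because $\si_i\ne 0$; the first summand, governed by a Neumann Poisson equation with source controlled by $\|\eta\|_{L^\infty}$, can be made arbitrarily small by shrinking the supports while keeping $\eta(x_i^0)=0$ and the prescribed differentials $d\eta(x_i^0)$ fixed. A standard linear-algebra perturbation argument on the finite-dimensional target $T_{x_0}^*X$ then yields surjectivity of $D_\psi F$. Once this is in place, $0$ is a regular value of $F$, $\pi$ is Fredholm of index $0$, and Sard--Smale delivers $\cM_g^{k+2,\al}(\Si)$ as a residual subset of $\cC^{k+2,\al}(\Si,\R_+)$. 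The main obstacle in this plan is precisely the surjectivity analysis just outlined: one must simultaneously handle the non-local Neumann correction $\dot h_{\psi_0}[\eta]$ and the local $\log\psi$-contribution, using the freedom in the $1$-jets of $\eta$ at the critical points to dominate the non-local correction.
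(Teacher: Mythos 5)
Your plan is correct in outline and the decisive surjectivity step is sound, but you take a genuinely different route from the paper, so it is worth spelling out the contrast. Both proofs run through a transversality theorem applied to the map $(x,\psi)\mapsto df_{\psi g}(x)$; the real content in each case is the surjectivity of the $\psi$-derivative at a zero. The paper does \emph{not} use your conformal transformation law. Instead it computes $D_\psi H^{\psi g}(x,\xi)\big|_{\psi=1}[\theta]=-|\Si|_g^{-1}\int_\Si\bigl(G^g(z,x)+G^g(z,\xi)\bigr)\theta\,dv_g$ (Proposition~\ref{prop:dcH}), then restricts to $\theta$ supported \emph{away} from the critical configuration $x^0=(x_1^0,\dots,x_m^0)$. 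For such $\theta$ the pointwise $\log\psi$-contribution vanishes identically, and after summing one gets coefficients $2\si_i\sum_j\si_j=2\si_i s$; a vector $u$ orthogonal to the image must satisfy $\sum_i\si_i\langle u_i,\nabla_{y_i}G^g(z,\cdot)\rangle\equiv 0$ for $z\neq x_i^0$, and sending $z\to x_i^0$ along a suitable ray uses the logarithmic singularity of $G^g$ to kill $u_i$ one at a time. This is where the hypothesis $s\ne 0$ enters. You do the opposite: you derive the global transformation law $G^{\psi g}=G^g+h_\psi\oplus h_\psi+K(\psi)$, $R^{\psi g}=R^g+2h_\psi+K(\psi)+\tfrac1{2\ka}\log\psi$, collapse $f_{\psi g}-f_g$ to the three terms $s^2K(\psi)+2s\sum\si_i h_\psi(x_i)+\sum\tfrac{\si_i^2}{2\ka_i}\log\psi(x_i)$, and then choose $\eta$ supported \emph{near} the $x_i^0$ with $\eta(x_i^0)=0$ and $d\eta(x_i^0)$ prescribed, so that the local $\log\psi$-term hits any target while the non-local $dh_{\psi}$-contribution is made small by shrinking supports and invoking $W^{2,p}$-estimates for the Neumann problem. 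Your version has two attractive features: it makes the structure of the $\psi$-dependence completely explicit (so Proposition~\ref{prop:dcH} becomes unnecessary), and — strikingly — it never uses $s\ne 0$, since when $s=0$ the non-local term drops out entirely and surjectivity is immediate from the local $\log\psi$-jets. So if your transformation law and localisation estimate are written out carefully, you would actually be proving Theorem~\ref{thm:main1} without the hypothesis $\si_1+\dots+\si_m\ne0$, which is a genuine improvement over the stated result; you should flag this.

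One genuine gap you should close: you invoke ``Sard--Smale'' after observing the projection is Fredholm of index $0$, but Sard--Smale in its standard form requires a second-countable (equivalently separable) source manifold, and the Hölder space $\cC^{k+2,\al}(\Si,\R_+)$ is \emph{not} separable. This is precisely why the paper uses Henry's variant (\cite[Theorem~5.4]{henry2005perturbation}), which replaces separability by $\si$-properness of the projection $\cF^{-1}(0)\to\Psi$; that $\si$-properness is easy to verify by exhausting the chart domain by compact sets bounded away from $\De_X$ and $\pa U$, as done in the paper. You should either adopt Henry's theorem and supply the $\si$-properness check, or work in the little Hölder space $c^{k+2,\al}$ and then pass back. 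A second, more cosmetic, point: your map $F$ takes values in the bundle $T^*X$ rather than a fixed Banach space, so ``$0$ is a regular value'' must be read as transversality to the zero section; to apply the abstract theorem verbatim you should pass to local coordinates as the paper does, covering $X\setminus\De_X$ by finitely many chart products and using that a finite intersection of residual sets is residual.
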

	
	As a consequence of Theorem~\ref{thm:main1} for any Riemannian metric $g$ on $\Si$, there exists a metric $\wt g$ which is arbitrarily close to $g$ and conformal to $g$, and such that $f_{\wt g}$ is a Morse function. 
	
	\begin{thm}\label{thm:main2}
		If $\si_i>0$ for all $i=1,\dots,m$ then the set
		\[
		\mathrm{Riem}_{Morse}^{k+2,\al}(\Si) := \{g\in \mathrm{Riem}^{k+2,\al}(\Si):\text{$f_g$ is a Morse function} \}
		\]
		is an open and dense subset of $\mathrm{Riem}^{k+2,\al}(\Si)$.
	\end{thm}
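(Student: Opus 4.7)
\emph{Density.} Since $\si_i>0$ for every $i$, we have $\si_1+\dots+\si_m>0\ne 0$, so Theorem~\ref{thm:main1} is applicable. Given any $g\in\mathrm{Riem}^{k+2,\al}(\Si)$, the residual set $\cM_g^{k+2,\al}(\Si)$ is dense in $\cC^{k+2,\al}(\Si,\R_+)$ and hence contains functions $\psi$ arbitrarily close to the constant $1$; the metric $\psi g$ is then arbitrarily close to $g$ in $\mathrm{Riem}^{k+2,\al}(\Si)$, and $f_{\psi g}$ is Morse by construction.

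\emph{Openness: continuous dependence.} Fix $g\in\mathrm{Riem}_{Morse}^{k+2,\al}(\Si)$. Standard elliptic theory for the Neumann Laplacian shows that $\tilde g\mapsto G^{\tilde g}|_K\in\cC^{k+2}(K)$ is continuous for every compact $K$ disjoint from the diagonal of $\Si\times\Si$, and that $\tilde g\mapsto R^{\tilde g}|_L\in\cC^{k+2}(L)$ is continuous for every compact $L$ contained either in $\intsigma$ or in $\pa\Si$. Consequently $\tilde g\mapsto f_{\tilde g}|_K\in\cC^2(K)$ is continuous for every compact $K\subset X\setminus\De_X$, and the implicit function theorem deforms each nondegenerate critical point of $f_g$ to a unique nondegenerate critical point of $f_{\tilde g}$ nearby, provided $\tilde g$ is close enough to $g$.

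\emph{Compactness of the critical set.} The decisive step is to produce a neighborhood $\cU$ of $g$ and a compact $K\subset X\setminus\De_X$ such that every critical point of $f_{\tilde g}$ for $\tilde g\in\cU$ lies in $K$. Suppose this fails; then there exist $g_n\to g$ and critical points $x^n$ of $f_{g_n}$ such that, after extracting a subsequence, either (a) some interior component $x_i^n$ (with $i\le l$) approaches $\pa\Si$, or (b) $d_{g_n}(x_i^n,x_j^n)\to 0$ for some pair $i\ne j$. Case (a) is excluded because the interior Robin function blows up like $R^{\tilde g}(\xi)=\frac{1}{2\pi}\log\frac{1}{d_{\tilde g}(\xi,\pa\Si)}+O(1)$ as $\xi\to\pa\Si$, so $|\si_i^2\nabla R^{g_n}(x_i^n)|\to\infty$ dominates the remaining bounded contributions to $\nabla_{x_i}f_{g_n}(x^n)$, contradicting criticality. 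For case (b) we isolate the maximal cluster $I\subset\{1,\dots,m\}$ of indices converging to a common point $y\in\Si$, set $d_n:=\diam_{g_n}\{x_i^n:i\in I\}\to 0$, and, in a local chart adapted to $y$ (a half-space chart when $y\in\pa\Si$), introduce rescaled variables $\xi_i^n:=(x_i^n-y_n)/d_n$. The rescaled critical-point equations converge to the Euler--Lagrange equations of the singular limit functional
\[
\wh f(\xi)=-\frac{1}{\ka(y)}\sum_{\substack{i,j\in I\\ i<j}}\si_i\si_j\log|\xi_i-\xi_j|\;+\;(\text{image-charge terms if } y\in\pa\Si),
\]
evaluated on a configuration of mutually distinct points (distinct by the maximality of $I$). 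Since $\wh f$ transforms as $\wh f(\lambda\xi)=\wh f(\xi)-C\log\lambda$ under dilations centered at $y$, where $C$ is a strictly positive linear combination of the products $\si_i\si_j$ (and, in the boundary case, of the $\si_i^2$ contributed by image charges), differentiation at $\lambda=1$ yields the Pohozaev-type identity $\sum_{i\in I}\xi_i\cdot\nabla_{\xi_i}\wh f\equiv -C$. At any critical point of $\wh f$ the left-hand side vanishes, contradicting $C>0$; hence (b) is also excluded.

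\emph{Conclusion and main obstacle.} With compactness established, the finite nondegenerate critical set $\{p_1,\dots,p_N\}$ of $f_g$ lies in $K$, and $|\nabla f_g|\ge c>0$ on $K$ away from small neighborhoods $B_\epsilon(p_i)$. Continuous dependence preserves this bound (say with $c/2$) for $f_{\tilde g}$ when $\tilde g$ is close to $g$, so every critical point of $f_{\tilde g}$ lies in some $B_\epsilon(p_i)$, and the implicit function theorem provides exactly one such critical point per $p_i$, automatically nondegenerate. Thus $f_{\tilde g}$ is Morse. The main technical obstacle lies in the blow-up analysis at a boundary cluster point $y\in\pa\Si$, where the half-space Neumann Green function carries image-charge contributions that enter the rescaled equation; tracking these while verifying that the Pohozaev constant $C$ remains strictly positive is the delicate point, and here the stronger hypothesis $\si_i>0$ (rather than the merely nonvanishing $\si_i\ne 0$ used in Theorem~\ref{thm:main1}) is exactly what guarantees $C>0$.
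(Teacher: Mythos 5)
Your overall structure matches the paper's: density comes from Theorem~\ref{thm:main1}, and openness hinges on showing that critical points of $f_g$ are confined to a compact subset of $X\setminus\De_X$ so that the implicit function theorem can be applied. The paper encapsulates the compactness step in Lemma~\ref{lem:H_INFTY}, which proves $|\nabla^g f_g(x)|_g\to\infty$ as $x\to\pa(X\setminus\De_X)$ for a \emph{fixed} $g$, by direct estimates on the Green and Robin functions in isothermal coordinates; once that is in hand, openness is almost immediate. Your proposal attempts a uniform-in-$\tilde g$ compactness via a blow-up/Pohozaev argument instead, which is a genuinely different route, but it has several gaps.

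First, your cases (a) and (b) are neither mutually exclusive nor exhaustive as handled. If an interior component $x_i^n$ approaches a boundary point $\xi$ \emph{and} other components (interior or boundary) converge to the same $\xi$, then in case (a) the "remaining contributions" to $\nabla_{x_i}f_{g_n}(x^n)$ are no longer bounded — the Green terms $\nabla_{x_i}G^{g_n}(x_i^n,x_j^n)$ blow up alongside the Robin term, and you would need to control signs to conclude. The paper spends most of its Case~2 on exactly this mixed scenario, using the one-sided estimate $\pa_{\zeta_2}R^g(\zeta)\le -\frac{1}{2\pi|y_\xi(\zeta)_2|}$ together with upper bounds $\cO(\log\varrho_n^{-1})$ on the $\tilde H^g$-terms (equations \eqref{eq:H^G_b_2}, \eqref{eq:H^G_int_2}); the positivity $\si_i\si_j>0$ is what prevents cancellation. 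Nothing in your (a) addresses this.

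Second, in case (b) the claim that the rescaled configuration $\xi_i^n=(x_i^n-y_n)/d_n$ is "mutually distinct by the maximality of $I$" is incorrect. Maximality of $I$ means every index that converges to $y$ has been included; it does not prevent two indices $i,j\in I$ from satisfying $|x_i^n-x_j^n|/d_n\to 0$, i.e.\ collapsing again after one rescaling. In general one needs an iterated (hierarchical) blow-up, which you do not set up. The limit functional $\wh f$ is then undefined at the limiting configuration and the Pohozaev identity cannot be evaluated as stated. (The paper avoids this entirely by selecting the "outermost" index $i(n)$ and estimating $\nabla_{x_{i(n)}}f_g$ directly, exploiting that the $\si_i\si_j$-weighted terms all point outward.)

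Third, you explicitly flag the boundary case $y\in\pa\Si$ — the image-charge contributions and the sign of the Pohozaev constant $C$ — as "the delicate point" and leave it unfinished. But that \emph{is} the substance of the lemma; without it the argument does not close. In the paper this is precisely where the estimates \eqref{eq:nagative_R_g}, \eqref{eq:partial_G^g_direct1}, \eqref{eq:eq:partial_G^g_direct1_negative} do the work, using the reflected Green-function expansion from Lemma~6 of \cite{yang_zhou2021}. As a strategic note: you are proving more than you need. If you first prove the fixed-$g$ statement $|\nabla^g f_g|\to\infty$ near $\pa(X\setminus\De_X)$ (as the paper does), then continuous dependence of $f_{\tilde g}$ on $\tilde g$ in $\cC^2$ on compact sets gives the uniform-in-$\tilde g$ compactness for free, and the rest of your argument (IFT at the finitely many nondegenerate critical points) is fine.
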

	
	\begin{rem}
		In \cite{Ahmedou-Bartsch-Fiernkranz:2023} the authors considered functions $f_g$ of the form \eqref{eq:f_g} on a surface $\Si$ without boundary and with $h(x_1,\dots,x_m)=\sum_{i=1}^m\log V(x_i)$ where $V\in\cC^2(\Si,\R_+)$. This is motivated by the mean field equation \eqref{eq:mf}. They proved that $f_g$ is a Morse function for $V$ in an open and dense subset of $\cC^2(\Si,\R_+)$. Also related is the paper \cite{Bartsch2017TheMP}, which deals with functions like $f_g$ where $G$ is the Green function of the Dirichlet Laplace operator on a bounded smooth domain $\Om\subset\R^n$. In \cite{Bartsch2017TheMP}, it is proved that $f_g$ is a Morse function for a generic domain $\Om$. The present paper seems to be the first investigating the Morse property of $f_g$ as in \eqref{eq:f_g} as a function of the Riemannian metric.
	\end{rem}
	
	\section{Preliminaries}\label{sec:prelim}
	
	Riemann surfaces are locally conformally flat, so there exist isothermal coordinates where the metric is conformal to the Euclidean metric (see \cite{chern1955, Hartman1955, Vekua1955}). We modify the isothermal coordinates applied in \cite{Esposito2014singular, figueroa2022bubbling,yang_zhou2021} to a Riemann surface with boundary. For $\xi\in\Si$ there exists a local chart $y_{\xi}:\Si\supset U(\xi)\to B^\xi\subset\R^2$ transforming $g$ to $e^{\vphi_\xi\circ y_\xi}\langle\cdot,\cdot\rangle_{\R^2}$. We may assume that $y_\xi(\xi)=0$. For $\xi\in\intsigma$ we may also assume that $\ov{U(\xi)}\subset \intsigma$ and that the image of $y_{\xi}$ is a disc $B^\xi:=\{ y\in \R^2: |y|< 2r_{\xi}\}$ of radius $2r_\xi>0$. For $\xi\in\pa\Si$, by Lemma 4 in \cite{yang_zhou2021},  there exists an isothermal coordinate system $\left(U(\xi), y_{\xi}\right)$ near $\xi$ such that the image of $y_{\xi}$ is a half disk ${B}_{2r_{\xi}}^{+}:=\{y\in\R^2: |y|<2r_{\xi}, y_2\geq 0\}$ of radius $2r_{\xi}>0$ and $y_{\xi}\left(U(\xi)\cap \partial \Sigma\right)= {{B}_{2r_{\xi}}^{+}} \cap \partial \mathbb{R}^{2}_+$.
	For any $x \in$ $y_{\xi}^{-1}\left({{B}_{2r_{\xi}}^{+}} \cap \partial \mathbb{R}^{2}_+\right)$, we have
	\begin{equation}\label{eq:out_normal_derivatives}
		\left(y_{\xi}\right)_*(\nu_g(x))=\left. -\exp\left( -\frac{\varphi_{\xi}(y)}2\right) \frac {\partial} { \partial y_2 }\right|_{	y=y_{\xi}(x)}.
	\end{equation}
	In this case, we take $B^\xi = {B}_{2r_{\xi}}^{+}$. 
	For $\xi\in \Si$ and $0<r\le 2r_\xi$ we set
	\[
	B_r^\xi := B^\xi \cap \{ y\in\R^2: |y|< r\}\quad \text{and}\quad U_{r}(\xi):=y_\xi^{-1}(B_{r}^{\xi}).
	\]
	Recall that $\vphi_\xi:B^\xi\to\R$ is related to $K$, the Gaussian curvature of $\Si$, by the equation
	\[
	-\De \vphi_\xi(y) = 2K\big(y^{-1}_\xi(y)\big) e^{\vphi_\xi(y)} \quad\text{for all $y\in B^\xi$.} 
	\]
	We can assume that $y_\xi$ and $\vphi_\xi$ depend smoothly on $\xi$, and that $\vphi_\xi(0)=0$ and $\nabla\vphi_\xi(0)=0$. 
	
	Observe that we have for $\zeta\in U(\xi)$, 
	\[
	\lim_{x\to\zeta}\frac{d_g(x,\zeta)}{\big|\vphi_\xi(x)-\vphi_\xi(\zeta)\big|} = e^{\frac12\vphi_\xi(\zeta)},
	\]
	which implies\begin{equation}\label{eq:robin}
		R^g(\zeta) = \lim_{x\to\zeta}\left(G^g(x,\zeta)+\frac1{\ka(\zeta)}\log\big|y_\xi(x)-y_\xi(\zeta)\big|\right)
		+ \frac1{2\ka(\zeta)}\vphi_\xi\big(y_\xi(\zeta)\big),
	\end{equation}
	and in particular, using $\vphi_\xi\big(y_\xi(\xi)\big)=\vphi_\xi(0)=0$:
	\[
	R^g(\xi) = \lim_{x\to\xi}\left(G^g(x,\xi)+\frac1{\ka(\xi)}\log\big|y_\xi(x)\big|\right) . 
	\]
	
	Now we construct a regular part $H^g(x,\xi)$ of the Green function $G^g(x,\xi)$ such that $H^g(\xi,\xi)=R^g(\xi)$. Let $\chi\in\cC^\infty(\R,[0,1])$ be such that 
	\[
	\chi(s)
	= \begin{cases}
		1 &\quad \text{if $|s|\leq 1$,}\\
		0 &\quad \text{if $|s|\geq 2$.}
	\end{cases}
	\]
	For $\xi\in\intsigma$ we choose $\de_\xi = \min\{\frac 1 2 r_\xi,\frac12\dist(x,\pa\Si)\}$, and for $\xi\in\pa\Si$ we set $\de_\xi:=\frac 1 2 r_\xi$. Next we define the cut-off function $\chi_\xi\in\cC^\infty(\Si,[0,1])$ by 
	\[
	\chi_\xi(x) := \begin{cases}
		\chi\big(|y_\xi(x)|/\de_\xi\big) &\quad \text{if $x\in U(\xi)$}\\
		0&\quad \text{if $x\in \Si\setminus U(\xi)$}
	\end{cases}. 
	\]
	Then, for $\xi\in\Si$ the function $H^g_{\xi}:=H^g(\cdot,\xi):\Si\to\R$ is defined to be the unique solution of the Neumann problem
	\begin{equation}\label{eqR}
		\left\{
		\begin{aligned}
			\De_g H^g_{\xi}
			&= \frac1{\ka(\xi)} (\De_g\chi_\xi)\log\big|y_\xi\big|+\frac2{\ka(\xi)} \left\langle\nabla^g\chi_\xi,\nabla^g\log|y_\xi|\right\rangle_g+\frac1{|\Si|_g}
			&&\quad \text{in $\intsigma$,}\\
			\pa_{ \nu_g} H^g_{\xi}
			&= \frac1{\ka(\xi)}\big(\pa_{\nu_g }\chi_\xi\big) \log|y_\xi| + \frac1{\ka(\xi)}\chi_\xi \pa_{\nu_g}  \log|y_\xi|	
			&&\quad \text{on $\pa\Si$,}\\
			\int_{\Si} H^g_{\xi} dv_g &= \frac1{\ka(\xi)}\int_{\Si} \chi_{\xi}\log|y_\xi|\, dv_g .
		\end{aligned}
		\right.
	\end{equation}
	
	\begin{lem}\label{lem:green}
		For $g\in \mathrm{Riem}^{k+2,\al}(\Si)$ the function $H^g$ is of class $\cC^{k+3,\al}$ in any compact subsets of $\Si\times\intsigma$ and in $\Si\times\pa\Si$. Moreover, it satisfies 
		\[
		G^g(x,\xi) = -\frac1{\ka(\xi)}\chi_\xi(x)\cdot\log\big|y_\xi(x)\big| + H^g(x,\xi)
		\]
		and $H^g(\xi,\xi)=R^g(\xi)$. Consequently $R^g$ is of class $\cC^{k+3,\al}$ in any compact subsets of $\intsigma$ and in $\pa\Si$.
	\end{lem}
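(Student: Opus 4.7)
The plan is to verify the explicit decomposition of $G^g$ first, deduce $R^g(\xi)=H^g(\xi,\xi)$ as an immediate consequence, and then obtain the regularity from Schauder theory applied to the parameter-dependent Neumann problem \eqref{eqR}.

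To establish the decomposition, I would use the distributional identity $\De_g\log|y_\xi|=\ka(\xi)\,\de_\xi$ on $\Si$, which follows from $\De\log|y|=2\pi\de_0$ in the Euclidean plane together with the conformal rescaling $\De_g=e^{-\vphi_\xi}\De$ in isothermal coordinates; the factor $\ka(\xi)=\pi$ at boundary points arises because a neighbourhood of $\xi\in\pa\Si$ is modelled on a half-disc with the homogeneous Neumann condition, which halves the strength of the singularity relative to the full plane. Integration by parts against $\chi_\xi\log|y_\xi|$ shows that the Neumann compatibility condition for \eqref{eqR} is satisfied, so $H^g_\xi$ is uniquely determined by the normalization in the last line of \eqref{eqR}. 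Setting $\wt G:=-\ka(\xi)^{-1}\chi_\xi\log|y_\xi|+H^g_\xi$ and applying Leibniz's rule, the source and boundary data in \eqref{eqR} have been tailored so that $-\De_g\wt G=\de_\xi-1/|\Si|_g$ in $\intsigma$, $\pa_{\nu_g}\wt G=0$ on $\pa\Si$, and $\int_\Si\wt G\,dv_g=0$; uniqueness of the Green function then gives $\wt G=G^g$, which is the announced decomposition. Since $\chi_\xi\equiv 1$ near $\xi$, this identity reduces to $G^g(x,\xi)+\ka(\xi)^{-1}\log|y_\xi(x)|=H^g(x,\xi)$ locally, and using $|y_\xi(x)|/d_g(x,\xi)\to e^{-\vphi_\xi(0)/2}=1$ as $x\to\xi$ together with the definition of $R^g$ yields $R^g(\xi)=H^g(\xi,\xi)$ at once.

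For the regularity, I would exploit that $\chi_\xi$ is supported in the (half-)disc of radius $2\de_\xi$ around $\xi$, while $\nabla^g\chi_\xi$ and $\De_g\chi_\xi$ are supported in the annular region $\{\de_\xi\le|y_\xi|\le 2\de_\xi\}$, on which $\log|y_\xi|$ is genuinely smooth. Hence, for $\xi$ varying in a compact subset of $\intsigma$ or of $\pa\Si$, the source in \eqref{eqR} lies in $\cC^{k+1,\al}(\Si)$ and the Neumann boundary data lie in $\cC^{k+2,\al}(\pa\Si)$, with norms bounded uniformly in $\xi$. Classical Schauder theory for the Neumann problem on $(\Si,g)$ with $g\in\mathrm{Riem}^{k+2,\al}(\Si)$ then yields $H^g_\xi\in\cC^{k+3,\al}(\Si)$ for each fixed $\xi$. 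Joint regularity in $(x,\xi)$ is obtained by formally differentiating \eqref{eqR} in $\xi$ and applying the same Schauder estimates to the equations for $\pa_\xi^\al H^g$ with $|\al|\le k+3$; the smooth dependence of $y_\xi$, $\chi_\xi$ and $\de_\xi$ on $\xi$ throughout each compact subset guarantees that the derivative equations have admissible right-hand sides and Neumann data. The regularity of $R^g$ then follows from $R^g(\xi)=H^g(\xi,\xi)$.

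The main obstacle is this last step, namely controlling the joint $(x,\xi)$-regularity. Both the cut-off $\chi_\xi$ and the isothermal chart $y_\xi$ depend on $\xi$, and the underlying model (disc vs.\ half-disc), as well as the value of $\ka(\xi)$, jump between interior and boundary points. This forces the compact subset in $\xi$ to lie either entirely in $\intsigma$ or entirely in $\pa\Si$, and one must bookkeep carefully how $\xi$-derivatives pass through $\chi_\xi$, $y_\xi$, and the radius $\de_\xi$ when differentiating \eqref{eqR}; the resulting source and boundary data must still be of the right Hölder class, uniformly in $\xi$ on the compact subset, in order that each Schauder estimate can be closed.
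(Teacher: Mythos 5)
Your overall plan — verify the announced decomposition by a distributional computation and uniqueness of $G^g$, read off $R^g(\xi)=H^g(\xi,\xi)$, and obtain regularity from Schauder estimates applied to the parameter-dependent Neumann problem \eqref{eqR} — is the same route the paper takes, and the passage from the decomposition to $R^g(\xi)=H^g(\xi,\xi)$ is correct.

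There is, however, a genuine gap in your regularity step for $\xi\in\pa\Si$. You justify the claim that the Neumann boundary data in \eqref{eqR} are of class $\cC^{k+2,\al}(\pa\Si)$ by observing that $\nabla^g\chi_\xi$ and $\De_g\chi_\xi$ are supported in the annulus $\{\de_\xi\le|y_\xi|\le 2\de_\xi\}$, where $\log|y_\xi|$ is smooth. That takes care of the interior source and of the first boundary term $\frac1{\ka(\xi)}(\pa_{\nu_g}\chi_\xi)\log|y_\xi|$. It does \emph{not} control the second boundary term $\frac1{\ka(\xi)}\chi_\xi\,\pa_{\nu_g}\log|y_\xi|$: since $\chi_\xi\equiv 1$ near $\xi$, this term is not supported away from the singularity, and its regularity is not a support statement but a cancellation. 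The key fact the paper invokes is \eqref{eq:out_normal_derivatives}: in the boundary isothermal chart the outward normal is pushed forward to $-e^{-\vphi_\xi/2}\pa_{y_2}$, hence
$\pa_{\nu_g}\log|y_\xi| = -e^{-\vphi_\xi/2}\,y_2/|y|^2 \equiv 0$ on $\pa\Si\cap U(\xi)$, since $y_2=0$ there. In a chart whose normal has a nonzero $\pa_{y_1}$-component this term would produce a genuine $1/y_1$-type singularity at $\xi$, and the Schauder estimate would not close. You allude to the half-disc model when explaining why $\ka(\xi)=\pi$, but this vanishing must be stated and used explicitly: it is what makes the boundary data in \eqref{eqR} of class $\cC^{k+2,\al}$, and it is also what makes the boundary terms in your integration by parts on the punctured half-disc drop, so that $\De_g(\chi_\xi\log|y_\xi|)$ really produces $\pi\de_\xi$ against Neumann test functions. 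As an aside, your treatment of the joint $(x,\xi)$-dependence — differentiating \eqref{eqR} in $\xi$ and rerunning Schauder — is in fact more explicit than what the paper writes, which only asserts uniform boundedness of $\|H^g_\xi\|_{\cC^{k+3,\al}}$ over compacta; that part of your proposal is a sound way to close the argument once the boundary observation above is in place.
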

	
	\begin{proof}
		First we observe that $e^{\varphi_{\xi}}\in \cC^{k+2,\al}(B^{\xi}, \R)$ because $g\in \mathrm{Riem}^{k+2,\al}(\Si) $. For $\xi\in \intsigma$, by the choice of $\delta_{\xi}$ we have that $-\De_g H^g_\xi$ is of class $\cC^{k+2,\al}$ class in $\Si$ and $\pa_{\nu_g } H^g(x,\xi) \equiv 0$ on $\pa \Si$. Now the Schauder estimate for the Neumann problem (see \cite{Nardi2014,yang_zhou2021}, for instance) implies that the solution of \eqref{eqR} uniquely exists in $\cC^{k+4,\al}(\Si,\R)$. 
		
		For $x\in U(\xi)\cap \pa \Si$, setting $y=y_{\xi}(x)$ we have:
		\begin{equation*}
			\pa_{ \nu_g } \log|y_{\xi}(x)|
			\stackrel{\eqref{eq:out_normal_derivatives}}{=} - e^{-\frac 1 2 {\varphi}_{\xi}(y)}\frac {\pa}{\pa y_2} \log |y|
			= - e^{-\frac 1 2 {\varphi}_{\xi}(y)}\frac{y_2}{|y|^2}\equiv 0. 
		\end{equation*}
		Clearly, $\pa_{\nu_g} \chi(|y_{\xi}|(x))=0$ for $x \in \pa\Si \cap U_{\de_{\xi}}(\xi)$. It follows that $\pa_{\nu_g} H^g_\xi$ is of class $\cC^{k+2,\al}$ on $\pa \Si$. Moreover $\De_{g}H^g_\xi$ is of class $\cC^{k+2,\al}$ in $\Si$. Consequently \eqref{eqR} has a unique solution $H^g_\xi \in \cC^{k+3,\al}(\Si,\R)$ by the Schauder estimates. 
		
		Finally $H^g_\xi$ is uniformly bounded in $\cC^{k+3, \al}$ for $\xi$ in any compact subsets of $\intsigma$ and in $\pa\Si$. Therefore $ H^g(\xi,\xi)$ is in $\cC^{k+3, \al}$ in any compact subsets of $\intsigma$ and in $\pa\Si$, and $H^g(\xi,\xi)=R^g(\xi)$ by \eqref{eq:robin}.
	\end{proof}
	
	For $g\in\mathrm{Riem}^{k+2,\al}(\Si)$ we now consider the map
	\[
	\cH^g: \Si\times\Si\times\cC^{k+2,\al}(\Si,\R_+) \to \R,\quad \cH^g(x,\xi,\psi) := H^{\psi g}(x,\xi).
	\]
	
	\begin{prop}\label{prop:dcH}
		The map $\cH^g$ is $\cC^1$ in $\Si\times\intsigma\times\cC^{k+2,\al}(\Si,\R_+)$ and in $\Si\times\pa\Si\times\cC^{k+2,\al}(\Si,\R_+)$. Moreover,  we have 
		\begin{equation}~\label{expansion_D1}
			D_{\psi} \cH^g(x,\xi, 1)[\theta] = - \frac {1}{|\Si|_g}\int_{\Si} (G^{g}(z,x) + G^g(z,\xi)) \theta(z) dv_g(z),
		\end{equation}
		for any   $\theta\in\cC^{k+2,\al}(\Si,\R)$.
	\end{prop}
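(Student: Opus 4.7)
The plan is to exploit the conformal covariance of isothermal coordinates in order to reduce everything to differentiating a single Neumann problem whose data depends smoothly on $\psi$. First, observe that if $g = e^{\varphi_\xi\circ y_\xi}\langle\cdot,\cdot\rangle_{\R^2}$ in the isothermal chart $y_\xi$, then $\psi g = e^{(\varphi_\xi + \log(\psi\circ y_\xi^{-1}))\circ y_\xi}\langle\cdot,\cdot\rangle_{\R^2}$, so the very same $y_\xi$ is isothermal for every conformal metric $\psi g$. For $\psi$ in a small $\cC^{k+2,\al}$-neighbourhood of $1$ we may therefore keep $y_\xi$, $\de_\xi$ and $\chi_\xi$ fixed in the defining system \eqref{eqR} for $H^{\psi g}_\xi$. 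Applying the two-dimensional identities $\Delta_{\psi g}=\psi^{-1}\Delta_g$, $\langle\nabla^{\psi g}u,\nabla^{\psi g}v\rangle_{\psi g}=\psi^{-1}\langle\nabla^g u,\nabla^g v\rangle_g$, $dv_{\psi g}=\psi\,dv_g$ and $\pa_{\nu_{\psi g}}=\psi^{-1/2}\pa_{\nu_g}$, a direct computation rewrites \eqref{eqR} as the Neumann problem with respect to $g$
\begin{equation*}
\Delta_g H^{\psi g}_\xi = A_\xi + \frac{\psi}{|\Si|_{\psi g}}\ \text{in }\intsigma,\quad \pa_{\nu_g}H^{\psi g}_\xi = B_\xi\ \text{on }\pa\Si,\quad \int_\Si H^{\psi g}_\xi\,\psi\,dv_g = \frac{1}{\ka(\xi)}\int_\Si \chi_\xi\log|y_\xi|\,\psi\,dv_g,
\end{equation*}
where $A_\xi$ and $B_\xi$ are the interior and boundary data already appearing in \eqref{eqR} and are \emph{independent of $\psi$}. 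Schauder theory for the Neumann problem combined with the implicit function theorem, applied over compact subsets of $\intsigma$ and over $\pa\Si$, then yields the claimed joint $\cC^1$ regularity of $\cH^g$.

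To compute the derivative, set $\psi_t := 1 + t\theta$ and $\dot H := \frac{d}{dt}\big|_{t=0}H^{\psi_t g}(\cdot,\xi)$. Differentiating the system above at $t=0$, using $\frac{d}{dt}\big|_0|\Si|_{\psi_t g}=\int_\Si\theta\,dv_g$, and applying the identity $G^g(z,\xi) = -\frac{1}{\ka(\xi)}\chi_\xi(z)\log|y_\xi(z)|+H^g(z,\xi)$ from Lemma~\ref{lem:green} to simplify the derivative of the normalization, produces the linear Neumann problem
\begin{equation*}
-\Delta_g\dot H = -\frac{\theta}{|\Si|_g} + \frac{1}{|\Si|_g^2}\int_\Si\theta\,dv_g\ \text{in }\intsigma,\quad \pa_{\nu_g}\dot H = 0\ \text{on }\pa\Si,\quad \int_\Si \dot H\,dv_g = -\int_\Si G^g(z,\xi)\theta(z)\,dv_g(z).
\end{equation*}
The compatibility condition is satisfied, and the Green-function representation combined with the mean-zero identity $\int_\Si G^g(x,z)\,dv_g(z)=0$ (from the normalization of $G^g$ together with its symmetry) cancels the constant contribution to the source, giving
\begin{equation*}
\dot H(x) = -\frac{1}{|\Si|_g}\int_\Si G^g(x,z)\theta(z)\,dv_g(z) + C.
\end{equation*}
The normalization then forces, via Fubini, $C = -\frac{1}{|\Si|_g}\int_\Si G^g(z,\xi)\theta(z)\,dv_g(z)$, and the symmetry $G^g(x,z) = G^g(z,x)$ yields the stated formula.

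The main obstacle is establishing the joint $\cC^1$ regularity of $\cH^g$, in particular arranging for the Schauder estimates to be uniform in the parameter $\xi$. Because the cut-off radius $\de_\xi$ for $\xi\in\intsigma$ depends on $\dist(\xi,\pa\Si)$, the data $(A_\xi,B_\xi,\chi_\xi,y_\xi)$ only varies smoothly over compact subsets of $\intsigma$, and separately over $\pa\Si$; this is precisely why the statement treats these two domains independently and why the conclusion must be proved on each of them in turn.
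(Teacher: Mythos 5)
Your proof is correct, and the end of the argument is the same as the paper's (both arrive at the identical linear Neumann problem \eqref{eq:H_C1} for the derivative $\dot H = w^0_\xi$ and then use the Neumann--Green representation together with $\int_\Si G^g(x,z)\,dv_g(z)=0$ and the symmetry of $G^g$). What differs is how you set up the differentiable dependence on $\psi$. The paper subtracts the defining systems \eqref{eqR} for $H^{\psi g}_\xi$ and $H^g_\xi$, derives the explicit integral identity \eqref{eq:diffe_H} for the difference $H^{\psi g}_\xi - H^g_\xi$, and then forms the difference quotient $w^t_\xi$ and passes to the limit with Schauder estimates. You instead use the two-dimensional conformal identities $\De_{\psi g}=\psi^{-1}\De_g$, $\langle\nabla^{\psi g}u,\nabla^{\psi g}v\rangle_{\psi g}=\psi^{-1}\langle\nabla^gu,\nabla^gv\rangle_g$, $dv_{\psi g}=\psi\,dv_g$, $\pa_{\nu_{\psi g}}=\psi^{-1/2}\pa_{\nu_g}$ to pull back \eqref{eqR} to a Neumann problem relative to the fixed metric $g$, observing that the interior and boundary data $A_\xi$ and $B_\xi$ are then $\psi$-independent and that $\psi$ enters only through the zeroth-order source term $\psi/|\Si|_{\psi g}$ and through the normalization constraint. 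This isolates the $\psi$-dependence cleanly, after which direct differentiation reproduces \eqref{eq:H_C1}. Your route is arguably more transparent about \emph{where} the metric enters; the paper's route produces the intermediate identity \eqref{eq:diffe_H}, which it then re-uses elsewhere (in the proof of Theorem~\ref{thm:main1}). One point you could tighten: rather than invoking the implicit function theorem for joint $\cC^1$ regularity, it is simpler (and what the paper effectively does) to note that the solution operator for the $g$-Neumann problem with the fixed normalization is a bounded linear map by Schauder estimates, and the data $\big(A_\xi + \psi/|\Si|_{\psi g},\, B_\xi,\, \text{normalization}\big)$ depends $\cC^1$ on $\psi$, so the composite is $\cC^1$ in $\psi$, uniformly for $\xi$ in compact subsets of $\intsigma$ or in $\pa\Si$.
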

	
	\begin{proof}
		For $g\in \mathrm{Riem}^{k+2,\al}(\Si)$ and $\psi\in \cC^{k+2,\al}(\Si,\R_+)$ we clearly have $\psi g\in \mathrm{Riem}^{k+2,\al}(\Si)$. By a direct calculation we obtain the following equations for $H^{\psi g}_\xi - H^g_\xi$: 
		\begin{equation*}
			\left\{
			\begin{aligned}
				-\De_{g} (H^{\psi g}_\xi - H^g_\xi) &= \frac 1 {|\Si|_g} - \frac {\psi}{ |\Si|_{\psi g}} && \text{in $\intsigma$,}\\
				\pa_{ \nu_g} (H^{\psi g}_\xi - H^g_\xi) &= 0 && \text{on $\pa\Si$,}\\
				\int_{\Si} (H^{\psi g}_\xi -H^{{g}}_\xi)\, dv_g &= -\int_{\Si} G^{\psi g}_\xi (\psi-1) dv_g.
			\end{aligned} 
			\right.
		\end{equation*}
		An expansion of $|\Si|_{\psi g}= \int_{\Si} dv_{\psi g}$ yields:
		$$
		-\De_g(H^{\psi g}_\xi - H^g_\xi)
		= \frac{1-\psi}{|\Si|_g}+ \frac{\int_{\Si}(\psi-1) dv_g}{|\Si|^2_{g}}+ \mathcal{ O}(\|\psi-1\|_{\cC^{k+2,\al}}^2)
		\quad\text{as $\|\psi-1\|_{\cC^{k+2,\al}} \to 0$.} 
		$$
		Recall that $G^{\psi g}_\xi =  - \frac 4 {\si(\xi)}\chi(|y_{\xi}|)\log {|y_{\xi}|}+H^{\psi g}_\xi$, so the representation formula gives: 
		\begin{equation}~\label{eq:diffe_H}
			H^{\psi g}_\xi(x) - H^g_\xi(x)
			= \frac 1 {|\Si|_g} \int_{\Si} G^{\psi g}(z,\xi)(\psi(z)-1)\,dv_g(z) -\frac 1 {|\Si|_{\psi g}} \int_{\Si} G^g(z,x)(\psi(z)-1)\,dv_g(z). 
		\end{equation}
		By standard elliptic estimates (see~\cite{Nardi2014,yang_zhou2021}) there exists a constant $C$ such that 
		\[
		\big\|H^{\psi g}_\xi- H^g_\xi\big\|_{\cC^{k+4,\al}} \leq C\cdot\|\psi-1\|_{\cC^{k+2,\al}},
		\]
		thus
		\begin{equation}\label{eq:H_C0}
			H^{\psi g}_\xi \to  H^g_\xi \quad\text{ in $\cC^{k+4,\al}$ as $\psi \to 1 $ in $\cC^{k+2,\al}$.}
		\end{equation}
		According to the construction of $H^g(x,\xi)$, the convergence in \eqref{eq:H_C0} is uniform for $\xi$ in any compact subset of $\intsigma$, and in $\pa \Si$. 
		It follows that $\cH_g(x,\xi,\cdot)$ is continuous at $1$, uniformly for $x\in \Si$ and $\xi$ in any compact subsets of $\intsigma$ or $\xi\in\pa\Si$. Using
		$ \cH_{g}(x,\xi,\psi)= \cH_{\psi g} (x,\xi,1)$ we see that $\cH_g(x,\xi, \cdot)$ is continuous at every $\psi\in \cC^{k+2,\al}(\Si,\R_+)$. 
		
		Next we prove that $\cH^g(x,\xi,\psi)$ is $\cC^1$ with respect to $\psi$. We fix $\theta\in \cC^{2+k,\al}(\Si, \R)$ and consider the metric $(1+t\theta)g$ with $t$ sufficiently small so that $1+t\theta > 0$. Then 
		\begin{equation}\label{eq:w_t}
			w_\xi^t(x) :=\frac1t\big(H^{(1+t\theta)g}(x,\xi)-H^g(x,\xi)\big) = \frac1t\big(\cH^g(x,\xi,1+t\theta)-\cH^g(x,\xi,1)\big)
		\end{equation}
		satisfies the following equations as $t\to0$:
		\begin{equation}~\label{eq:RH_C1}
			\left\{ \begin{aligned}
				-\De_{g}w_\xi^t &= \frac 1 {|\Si|^2_g}\int_{\Si} \theta\, dv_{g} -\frac{\theta}{|\Si|_g} + \cO(t) &&\quad \text{in $\intsigma$},\\
				\pa_{\nu_g} w_t(x,\xi) &= 0 &&\quad x\in \pa\Si,\\
				\int_{\Si} w_\xi^t\, dv_{g} &=- \int_{\Si} G^{(1+t\theta)g}_\xi \cdot \theta\, dv_g 
			\end{aligned} \right.,
		\end{equation}
		where $\cO(t)$ is defined with respect to the $\cC^{k+2,\al}$-norm. Applying the standard elliptic estimates, $w_\xi^t$ converges as $t\to0$ to some function $w_\xi^0(x) = D_\psi\cH^g(x,\xi,1)[\theta]$ in $\cC^{k+4,\al}(\Si, \R)$. Moreover, $w_\xi^0$ satisfies the equations
		\begin{equation}~\label{eq:H_C1}
			\left\{
			\begin{aligned}
				-\De_{g}w_\xi^0 &= \frac 1 {|\Si|^2_g}\int_{\Si} \theta\, dv_{g} -\frac{\theta}{|\Si|_g} &&\quad  \text{ in $\intsigma$,} \\
				\pa_{\nu_g} w_\xi^0 &= 0 &&\quad \text{ on $\pa\Si$,}\\
				\int_{\Si} w_\xi^0\, dv_{g} &= -\int_{\Si} G^{g}_\xi \cdot \theta\, dv_g.
			\end{aligned} 
			\right.
		\end{equation}
		The representation formula now implies \eqref{expansion_D1}:
		\[
		D_\psi\cH^g(x,\xi,1)[\theta] = w_\xi^0(x) = -\frac 1 {|\Si|_{g}}\int_{\Si} \big(G^{g}(z,x) +G^{g}(z,\xi)\big) \cdot \theta(z)\, dv_g(z).  
		\]
		Replacing $g$ by $\psi g$ and $\theta$ by $\frac\theta\psi$, we obtain the derivative of $\cH_g(x,\xi,\psi)$ for arbitrary $\psi\in \cC^{k+2,\al}(\Si,\R_+)$:
		\begin{equation}\label{eq:HH_C1}
			\begin{aligned}
				D_{\psi}\cH^g(x,\xi,\psi)[\theta] &= \lim_{t\to0}\frac1t\big(H^{(\psi+t\theta) g}(x,\xi)-H^{\psi g}(x,\xi)\big) \\
				&= -\frac{1}{|\Si|_{\psi g}}\int_{\Si} \left( G^{\psi g} (z,x)+G^{\psi g}(z,\xi) \right) \cdot \frac{\theta(z) }{\psi(z)}dv_{\psi g}(z)\\
				&= -\frac{1}{|\Si|_{\psi g}}\int_{\Si} \left( G^{\psi g} (z,x)+G^{\psi g}(z,\xi) \right) \cdot \theta(z) dv_{g}(z).
			\end{aligned}
		\end{equation}
		In order to see that $\cH_g(x,\xi,\psi)$ is continuously Fr\'echet differentiable with respect to $\psi$ it is sufficient to prove that $D_{\psi}\cH_g(x,\xi,\psi)$ is continuous in $\psi$ as a linear operator on $\cC^{k+2,\al}(\Si,\R_+)$. For $\psi_1,\psi_2\in \cC^{k+2,\al}(\Si,\R_+)$ there holds
		\begin{equation*}
			\begin{aligned}
				\big| D_{\psi}\cH_g(x,\xi,\psi_1)[\theta]-D_{\psi}\cH_g(x,\xi,\psi_2)[\theta] \big|
				&= \left| \frac{1}{|\Si|_{\psi_1 g}}\int_{\Si} \left( G^{\psi_1 g} (z,x)+G^{\psi_1 g}(z,\xi) \right)\theta(z) dv_{g}(z)\right.\\
				&\hspace{1cm} \left. - \frac{1}{|\Si|_{\psi_2 g}}\int_{\Si} \left( G^{\psi_2 g} (z,x)+G^{\psi_2 g}(z,\xi) \right)\theta(z) dv_{g}(z)\right|  \\
				&\le C\cdot\|\theta\|_{\cC^{k+2,\al}} \cdot \|\psi_1-\psi_2\|_{\cC^{k+2,\al}}
			\end{aligned}
		\end{equation*}
		where we applied \eqref{eq:HH_C1}; here $C>0$ is a constant. Therefore $\cH^g$ is $\cC^1$ in $\Si\times \intsigma \times \cC^{k+2,\al}(\Si,\R_+)$ and in $\Si\times\pa\Si\times \cC^{k+2,\al}(\Si,\R_+).$
	\end{proof}
	
	\section{Proof of Theorem~\ref{thm:main1}}\label{sec:proof1}
	The proof is based on the following transversality theorem \cite[Theorem~5.4]{henry2005perturbation}.
	
	\begin{thm}~\label{thm:trans}
		Let $M,\Psi,N$ be Banach manifolds of class $\cC^r$ for some $r\in \N$, let $\cD\subset M\times \Psi$ be open, let $\cF: \cD\to N$ be a $\cC^r$ map, and fix a point $z\in N$. Assume for each $(y,\psi)\in \cF^{-1}(z)$ that: 
		\begin{itemize}
			\item[(1)] $D_y\cF(y,\psi): T_y M\to T_{z} N$ is semi-Fredholm with index $<r$;
			\item[(2)] $D\cF(y,\psi) : T_y M \times T_\psi \Psi \to T_{z} N$ is surjective;
			\item[(3)]  $\cF^{-1}(z)\to \Psi$, $(y,\psi)\mapsto \psi$, is $\sigma$-proper. 
		\end{itemize}
		Then 
		\[
		\Psi_{reg}:=\{ \psi\in \Psi: z \text{ is a regular value of } \cF(\cdot, \psi)\}
		\]
		is a residual subset of $\Psi$. 
	\end{thm}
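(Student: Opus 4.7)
The plan is to deduce this parameterized Sard-type statement from a Sard--Smale theorem applied to the natural projection $\pi:\cF^{-1}(z)\to\Psi$, $(y,\psi)\mapsto\psi$. The overall architecture is classical: upgrade $\cF^{-1}(z)$ to a $\cC^r$ Banach submanifold, show that $\pi$ inherits semi-Fredholm structure from $D_y\cF$, invoke Sard--Smale on $\pi$, and finally match the regular values of $\pi$ with the elements of $\Psi_{reg}$.

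First I would localize at any $(y_0,\psi_0)\in\cF^{-1}(z)$ and apply the implicit function theorem: the surjectivity of $D\cF(y_0,\psi_0)$ from hypothesis~(2), combined with the semi-Fredholm splittings of $D_y\cF$ from hypothesis~(1), furnishes a continuous right inverse for $D\cF(y_0,\psi_0)$ and a topological complement of $\ker D\cF(y_0,\psi_0)$, so $\cF^{-1}(z)$ carries a $\cC^r$ Banach submanifold structure with tangent space $\ker D\cF(y,\psi)$ at each point. A short linear-algebra computation then identifies $\ker D\pi(y,\psi)\cong\ker D_y\cF(y,\psi)$ and, using the surjectivity of $D\cF$, $\operatorname{coker}D\pi(y,\psi)\cong\operatorname{coker}D_y\cF(y,\psi)$ via the map induced by $D_\psi\cF$. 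Consequently $\pi$ is itself a $\cC^r$ semi-Fredholm map whose index equals that of $D_y\cF$, and is therefore strictly less than $r$.

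The analytic core is a Sard--Smale theorem for $\sigma$-proper semi-Fredholm maps: if $\pi$ is $\sigma$-proper and $\cC^r$ semi-Fredholm of index less than $r$, then the set of critical values of $\pi$ is meager. Using hypothesis~(3), I would write $\cF^{-1}(z)=\bigcup_n A_n$ with $\pi|_{A_n}$ proper, and on each $A_n$ perform a local Lyapunov--Schmidt reduction to finite dimensions, exploiting the finite-dimensional kernel of $D\pi$. The classical finite-dimensional Sard theorem then applies provided $r$ strictly exceeds the index, which is exactly the content of hypothesis~(1). Properness of $\pi|_{A_n}$ makes the image of its singular set closed, while Sard rules out interior points, so this image is nowhere dense; a countable union of nowhere dense sets is meager, whence the complement is residual in $\Psi$ by Baire's theorem.

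Finally I would verify the equivalence $\Psi_{reg}=\{\psi:\pi\text{ has no singular point over }\psi\}$. Indeed $D\pi(y,\psi)$ is surjective exactly when every $\eta\in T_\psi\Psi$ can be lifted to an element of $\ker D\cF(y,\psi)$, equivalently when $D_\psi\cF(y,\psi)\eta\in\operatorname{range}D_y\cF(y,\psi)$ for all $\eta$; combined with the surjectivity of $D\cF(y,\psi)$ this forces $D_y\cF(y,\psi)$ itself to be surjective, which is precisely the regularity of $z$ for $\cF(\cdot,\psi)$. The main obstacle is the $\sigma$-proper semi-Fredholm Sard--Smale step, where the strict inequality ``index $<r$'' is essential and must be tracked carefully through the local finite-dimensional reduction; the other ingredients (IFT, index bookkeeping, and the passage between regular values of $\pi$ and of $\cF(\cdot,\psi)$) are formal once the correct splittings are in place.
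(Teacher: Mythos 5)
The paper does not prove this theorem at all: it is quoted verbatim as Theorem~5.4 from Henry's monograph (\cite[Theorem~5.4]{henry2005perturbation}) and then applied, so there is no in-paper proof to compare with. Your outline is the standard Sard--Smale architecture, and the linear algebra you carry out is correct: $\ker D\pi(y,\psi) \cong \ker D_y\cF(y,\psi)$ and, using surjectivity of the full differential, $\operatorname{coker} D\pi(y,\psi) \cong \operatorname{coker} D_y\cF(y,\psi)$, so $\pi$ inherits the semi-Fredholm index of $D_y\cF$. That part is fine, and for the Fredholm case (finite-dimensional cokernel) the plan goes through.

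However, the theorem hypothesizes only that $D_y\cF$ is \emph{semi}-Fredholm with index $<r$, which allows $\dim\ker D_y\cF < \infty$ together with $\dim\operatorname{coker} D_y\cF = \infty$, i.e.\ index $=-\infty$. In that branch your sketch has a genuine gap at two places. First, the IFT step needs $\ker D\cF(y_0,\psi_0)$ to be topologically complemented (equivalently, $D\cF$ to admit a bounded right inverse); the ``semi-Fredholm splittings'' you invoke would require $\operatorname{range} D_y\cF$ to be complemented in $T_zN$, which is automatic only when its codimension is finite. Second, and more seriously, a ``local Lyapunov--Schmidt reduction to finite dimensions'' does not exist when $\operatorname{coker} D\pi$ is infinite-dimensional, so the finite-dimensional Sard theorem is never applicable; this is precisely where Henry's proof departs from the classical Smale argument, by choosing a finite-codimensional closed subspace containing $\operatorname{range} D\pi(y_0,\psi_0)$, projecting to the finite-dimensional quotient, and bounding the rank of the composition near $(y_0,\psi_0)$ to conclude that the local image is nowhere dense. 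That argument is the actual content of the semi-Fredholm extension and is missing from your proposal. (In the application in this paper $M=N=\R^{l+m}$, so $D_y\cF_g$ is automatically Fredholm of index $0$ and the gap is invisible; but as a proof of the theorem as stated, the $-\infty$ index branch must be supplied.)
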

	
	\begin{altproof}{Theorem~\ref{thm:main1}}
We set $\Psi := \cC^{k+2,\al}(\Si,\R_+)$ and consider the functions $f_{\psi g}:X \setminus \De_X \to \R$ for $\psi\in\cC^{k+2,\al}(\Si,\R_+)$ first in a local isothermal chart. Let $y_{\xi_i}:\Si\supset U(\xi_i) \to B^{\xi_i} \subset \R^2$ be isothermal charts of $\Si$ as in Section~\ref{sec:prelim} with $\xi_1,\dots,\xi_l\in\intsigma$, $\xi_{l+1},\dots,\xi_m\in\pa\Si$. For simplicity of notation we set $Y_i:=y_{\xi_i}:U_i:=U(\xi_i) \to B_i:=B^{\xi_{i}}\subset\R^2$ for $i=1,\dots,l$, and $Y_i:=\pi_1\circ y_{\xi_i}:U_i:=U(\xi_i)\cap\pa\Si \to B_i \subset \R$ for $i=l+1,\dots,m$; here $\pi_1:\R^2\to\R$ is the projection onto the first component. For $i=l+1,\dots,m$ we thus have $y_{\xi_i}(x) = (Y_i(x),0)$. Then
	\[
		Y=Y_1\times\dots\times Y_m: X \supset U:=U_1\times\cdots\times U_m \to \R^{l+m},\ \ 
		(x_1,\dots,x_m) \mapsto \big(Y_1(x_1),\dots,Y_m(x_m)\big),
	\]
is a chart of $X$.  Set $M=N:=\R^{l+m}$ and $V := Y(U\cap X\setminus \De_X) \subset \R^{l+m}=M$ so that $\cD := V \times \cC^{k+2,\al}(\Si,\R_+)$ is an open subset of $\R^{l+m} \times \cC^{k+2,\al}(\Si,\R_+)$. 
		
		We will apply Theorem~\ref{thm:trans} to prove that $0\in \R^{l+m}$ is a regular value of 
		\[
		\nabla \big(f_{\psi g}\circ Y^{-1}\big): \R^{l+m}\supset V=Y(U\cap X\setminus \De_X) \to \R^{l+m}
		\]
		for $\psi$ in a residual subset $\Psi_U \subset \Psi$. This implies that the restriction of $f_{\psi g}$ to $U\cap X\setminus \De_X$ is a Morse function for $\psi\in\Psi_U$. Then Theorem~\ref{thm:main1} follows because $X$ is covered by finitely many neighborhoods $U$ as above and because the intersection of finitely many residual sets is a residual set. 
		
It remains to prove that the map
	\[
		\cF_g: \R^{l+m}\times\cC^{k+2,\al}(\Si,\R_+) \supset \cD\to \R^{l+m},\quad
		(y,\psi) mapsto \nabla\big(f_{\psi g}\circ Y^{-1}\big)(y),
	\]
satisfies the assumptions of Theorem~\ref{thm:trans} with $r=1$. Concerning the differentiability it is clear that $\cF_g$ is $\cC^1$ as a function of $y\in V$. In order to see that $\cF_g$ is also $\cC^1$ in $\psi$, by Lemma~\ref{lem:green} it is sufficient to prove that $\nabla_y \left(H^{\psi g}\left(Y_i^{-1}(\cdot),Y_j^{-1}(\cdot)\right)\right)$ is $\cC^1$ in $\psi$. We recall that $w^t_{\xi}$ is defined by~\eqref{eq:w_t}. Applying the representation formula of $w^t_{\xi}$ and Lebesgue's dominated convergence theorem, we have for $(y,\psi)\in\cD$ and $\theta \in \cC^{k+2,\al}(\Si,\R)$:
		\[
		\begin{aligned}
			& D_{\psi}\big|_{\psi=1}\nabla_y H^{\psi g}(Y_i^{-1}(y_i), Y_j^{-1}(y_j) )[\theta]
			=\lim_{t\rightarrow 0}\nabla_y \left(\left. w^t_{\xi}(x)\right|_{x= Y^{-1}_i(y_i),\xi=Y^{-1}_{j}(y_j)}\right)\\
			&\hspace{1cm}\stackrel{\eqref{eq:diffe_H}}{=} \lim_{t\to 0}\left( -\frac 1 {|\Si|_g}\int_{\Si} \nabla_y G_{Y_{j}^{-1}(y_j)}^{(1+t\theta)g}\theta dv_g 
			+\int_{\Si} \frac 1 t \left( \frac 1 {|\Si|_g}-\frac{1+t\theta}{|\Si|_{(1+t\theta)g}}\right)\nabla_y G^g_{Y^{-1}_i(y_i)}dv_g \right)\\
			&\hspace{1cm}= - \frac {1}{|\Si|_g}\int_{\Si} \nabla_y  \big(G^{g}(z,Y_i^{-1}(y_i) ) + G^g(z,Y_j^{-1}(y_j))\big)\cdot \theta(z) \,dv_g(z)\\
			&\hspace{1cm}=\nabla_y D_{\psi} \cH^g(Y_i^{-1}(y_i),  Y_j^{-1}(y_j), 1) [\theta],
		\end{aligned}
		\]
		where we used Proposition~\ref{prop:dcH}. Since $ D_{\psi}\cF_g(y,\psi)[\theta] =  D_{\psi}\cF_{\psi g}(y, 1)\left[\frac \theta \psi\right]$ we obtain 
		\[
		D_{\psi}\cF_g(y,\psi)[\theta]
		= - \frac {1}{|\Si|_{\psi g}}\int_{\Si} \nabla_y \big(G^{\psi g }(z,Y_i^{-1}(y_i) ) + G^{\psi g}(z,Y_j^{-1}(y_j))\big)\cdot \theta(z) \,dv_{g}(z).  
		\]
		As in the proof of Proposition~\ref{prop:dcH} we deduce that $ \cF_g(y,\psi)$ is $\cC^1$ on $U$.
		
		Now we need to check the assumptions (1)-(3) of Theorem~\ref{thm:trans}. Obviously $D_y\cF_g(y,\psi):\R^{l+m}\to\R^{l+m}$ is a Fredholm operator of index $0<1$, hence (1) holds. Also, (3) is easy to prove: For $j\in\N$ the set 
		\[
		M_j := \big\{Y(x)\in\R^{l+m}: x\in U,\ d_g\big(x,\De_X \cup \pa U\big) \ge 2^{-j}\big\} \subset Y(U) \subset \R^{l+m}
		\]
		is compact as a continuous image of a compact set. Therefore the map
		\[
		M_j\times \cC^{k+2,\al}(\Si,\R_+) \to \cC^{k+2,\al}(\Si,\R_+),\quad (y,\psi) \mapsto \psi,
		\]
		is proper, hence its restriction to $\cF_g^{-1}(0) \cap \big( M_j\times \cC^{k+2,\al}(\Si,\R_+)\big)$ is proper. Since $V = \bigcup_{j=1}^\infty M_j$, so $\cD=\bigcup_{j=1}^\infty \big(M_j \times \cC^{k+2,\al}(\Si,\R_+)\big)$ it follows that the map
		\[
		\cF_g^{-1}(0) = \bigcup_{j=1}^\infty \Big(\cF_g^{-1}(0) \cap \big( M_j\times \cC^{k+2,\al}(\Si,\R_+)\big)\Big) \to \cC^{k+2,\al}(\Si,\R_+),
		\quad (y,\psi) \mapsto \psi,
		\]
		is $\si$-proper.
		
		Finally we prove the surjectivity of the derivative $D\cF_g(y,\psi): \R^{l+m}\times\cC^{k+2,\al}(\Si,\R)  \to \R^{l+m}$ at a point $(y,\psi)\in \cF_g^{-1}(0)$. In fact, we shall prove that $D_\psi\cF_g(y,\psi):\cC^{k+2,\al}(\Si,\R) \to \R^{l+m}$ is onto. Since
		\[
		D_{\psi} \cF_g(y, \psi)[\theta] = D_{\psi} \cF_{\psi g}(y, 1)\left[\theta/ \psi\right]\qquad\text{for $\theta\in\cC^{2+k,\al}(\Si, \R)$}
		\]
		it is sufficient to consider the case $\psi\equiv1$. We observe for $\theta\in\cC^{2+k,\al}(\Si, \R)$:
		\[
		D_{\psi}\big|_{\psi=1}\nabla_y G^{\psi g}(Y_i^{-1}(y_i), Y_j^{-1}(y_j) )[\theta]
		= D_{\psi}\big|_{\psi=1}\nabla_y H^{\psi g}(Y_i^{-1}(y_i), Y_j^{-1}(y_j) )[\theta].
		\] 
		Now Proposition~\ref{prop:dcH} yields for $\theta\in\cC^{2+k,\al}(\Si, \R)$ with $\supp(\theta)\subset\Si\setminus\big\{Y_1^{-1}(y_1),\dots,Y_m^{-1}(y_m)\big\}$:
		\[
		\begin{aligned}
			D_{\psi} \cF_g(y, 1)[\theta]
			&= \frac{d}{dt}\bigg|_{t=0}\Bigg( \sum_{i=1}^m \si_i^2 \nabla_y R^{(1+t\theta)g}(Y_i^{-1}(y_i) ) \\
			&\hspace{2cm}  + \sum_{\mytop{i,j=1}{i\ne j}}^m\si_i\si_j\nabla_y G^{(1+t\theta)g}(Y_i^{-1}(y_i), Y_j^{-1}(y_j)) 
			+ \nabla_y\big( h\circ Y^{-1}\big)(y)\Bigg)\\
			&= -\frac 1 {|\Si|_g} \int_{\Si} \Bigg(\sum_{i=1}^m2\si^2_i \nabla_y G^{g}(z, Y_i^{-1}(y_i) ) \\
			&\hspace{2.7cm}  + \sum_{\mytop{i,j=1}{i\ne j}}^m \si_i\si_j\big(\nabla_y G^g(z, Y_i^{-1}(y_i)) 
			+ \nabla_y G^g(z, Y_j^{-1}(y_j))\big) \Bigg) \theta(z)\, dv_g(z). 
		\end{aligned} 
		\]
		Consider an element $u=(u_1,\cdots,u_m)\in\R^{l+m}$ with $u_1,\dots,u_l\in\R^2$, $u_{l+1},\dots,u_m\in\R$, that is orthogonal to the range of $D_\psi\cF_g(y,1)$, i.e.\ it satisfies for every $\theta \in \cC^{2+k,\al}(\Si, \R)$ with $\supp(\theta) \subset \Si \setminus \big\{Y_1^{-1}(y_1),\dots,Y_m^{-1}(y_m)\big\}$:
		\[
		\begin{aligned}
			0 &= \big\langle u,D_\psi\cF_g(y,1)[\theta]\big\rangle = \sum_{i=1}^m \big\langle u_i, (D_\psi\cF_g(y,1)[\theta])_i\big\rangle\\
			&= - \sum_{i=1}^m \frac1{{|\Si|_g} } \bigg(2\si_i^2 + 2\sum_{\mytop{j=1}{j\ne i}}^m \si_i\si_j \bigg)
			\int_{\Si} \big\langle u_i,\nabla_{y_i} G^g(z, Y_i^{-1}(y_i))\big\rangle\cdot \theta(z) \,dv_g(z)
		\end{aligned}
		\]
		This implies, using $\sum_{j=1}^m\si_j\ne0$:
		\begin{equation}\label{eq:u-ortho}
			\sum_{i=1}^m \si_i\big\langle u_i,\nabla_{y_i} G^g(z, Y_i^{-1}(y_i))\big\rangle  = 0
			\qquad\text{for $z\in \Si\setminus \big\{Y_1^{-1}(y_1),\dots,Y_m^{-1}(y_m)\big\}$.} 
		\end{equation}
		Setting $\ka_i=2\pi$ for $i=1,\dots,l$ and $\ka_i=\pi$ for $i=l+1,\dots,m$ we have
		\[
		G^g(z,Y_i^{-1}(y_i)) = H^g\big(z,Y_i^{-1}(y_i)\big) - \frac 1{\ka_i} \chi\big(4|Y_i(z)-y_i|/\delta_{\xi_i}\big)\log|Y_i(z)-y_i|.
		\]
		Now we define $z_i(t) := Y_i^{-1}(y_i+tu_i)$ for $i\in\{1,\dots,m\}$ and observe that
		\[
		\nabla_{y_j}G^g\big(z_i(t),Y_j^{-1}(y_j)\big) = \cO(1)\qquad\text{as $t\to0$ for $j\ne i$}
		\]
		whereas 
		\[
		\big\langle u_i,\nabla_{y_i} G^g(z_i(t), Y_i^{-1}(y_i))\big\rangle  \to \infty\qquad\text{as $t\to0$.}
		\]
		Equation \eqref{eq:u-ortho} implies $u_i=0$ because $\si_i \ne 0$. This holds for all $i$, hence $u=0$ and $D_\psi\cF_g(y,\psi)$ must be onto.
	\end{altproof}

	\section{Proof of Theorem~\ref{thm:main2}}\label{sec:proof2}
	Theorem~\ref{thm:main2} follows easily from the following lemma.
	
	\begin{lem}~\label{lem:H_INFTY} 
		If $\si_i>0$ for $i=1,\dots,m$ then
		$\displaystyle\lim_{x\to \pa(X\setminus \De_X)}|\nabla^g f_g(x)|_g=\infty$.
	\end{lem}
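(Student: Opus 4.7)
The plan is to argue by contradiction through a blow-up analysis. Suppose there is a sequence $x^{(n)}\in X\setminus\De_X$ converging to $\pa(X\setminus\De_X)$ with $|\nabla^g f_g(x^{(n)})|_g$ bounded, and set
\[
\rho_n := \min\Bigl(\bigl\{d_g(x_i^{(n)},x_j^{(n)}) : i\neq j\bigr\}\cup\bigl\{d_g(x_i^{(n)},\pa\Si) : 1\leq i\leq l\bigr\}\Bigr);
\]
then $\rho_n\to 0$. After passing to a subsequence I single out a cluster $S\subseteq\{1,\dots,m\}$ whose members concentrate at a common limit $\bar x_S\in\Si$ at the scale $\rho_n$, chosen so that the infimum $\rho_n$ is realized either through a collision within $S$ or through an interior index $i\in S$ with $i\leq l$ approaching $\pa\Si$. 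Pick an isothermal chart $y=y_{\bar\xi}$ of Section~\ref{sec:prelim} centered at $\bar\xi = \bar x_S$ when $\bar x_S\in\intsigma$, or at the nearest point of $\pa\Si$ when $\bar x_S\in\pa\Si$; write $y_i^{(n)} := y(x_i^{(n)})$ and rescale $\tilde y_i^{(n)} := y_i^{(n)}/\rho_n$. A further subsequence yields $\tilde y_i^{(n)}\to\tilde y_i$, with the $\tilde y_i$ pairwise distinct in $\R^2$, or in the closed half-plane $\{y_2\geq 0\}$ in the boundary case with $\tilde y_{i,2}\geq 1$ for $i\leq l$ and $\tilde y_{i,2}=0$ for $i>l$.

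The key tool is the local image-charge expansion of the Neumann Green function. Using Lemma~\ref{lem:green} together with the classical reflection method for the Neumann Laplacian, near a boundary point $\bar\xi\in\pa\Si$ one has
\[
G^g(x,\xi) = -\frac{1}{2\pi}\bigl(\log|y(x)-y(\xi)| + \log|y(x)-y(\xi)^*|\bigr) + \hat H(x,\xi),
\]
with $y(\xi)^*:=(y(\xi)_1,-y(\xi)_2)$ and $\hat H$ of class $\cC^{k+3,\al}$ near the diagonal; correspondingly $R^g(\xi)=-\frac{1}{2\pi}\log|y(\xi)-y(\xi)^*| + O(1)$ diverges logarithmically as an interior $\xi$ approaches $\pa\Si$. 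Substituting these expansions into $\nabla_{y_i}f_g$ and multiplying by $\rho_n$, the contributions from indices $j\notin S$, from $h$, from the remainder $\hat H$, and from the conformal factor $e^{\varphi_{\bar\xi}/2}$ are all $O(\rho_n)$, and one computes an explicit limit $V_i := \lim_{n\to\infty}\rho_n\nabla_{y_i}f_g(x^{(n)})\in\R^2$ of Coulomb type: its coefficients $\si_i\si_j>0$ multiply the vectors $(\tilde y_i-\tilde y_j)/|\tilde y_i-\tilde y_j|^2$ for $j\in S\setminus\{i\}$, supplemented in the boundary case by analogous reflected terms involving $\tilde y_j^*:=(\tilde y_{j,1},-\tilde y_{j,2})$. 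The boundedness assumption then forces $V_i=0$ for every $i\in S$.

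To reach the contradiction I take the Euclidean inner product of $V_i=0$ with $\tilde y_i$ and sum over $i\in S$. A direct symmetrization using the identity
\[
|\tilde y_i-\tilde y_j^*|^2 = |\tilde y_i|^2+|\tilde y_j|^2-2\,\tilde y_i\cdot\tilde y_j^*
\]
(which holds because the chart origin lies on the reflection line $\{y_2=0\}$) collapses each double sum to a combinatorial constant, producing $\sum_{i\in S}\tilde y_i\cdot V_i = -\frac{1}{2\pi}Q_S$, where $Q_S$ equals $\sum_{i\neq j\in S}\si_i\si_j$ in the interior case and $2\bigl(\sum_{i\in S}\si_i\bigr)^{2} - \sum_{i\in S,\,i\leq l}\si_i^2 - 2\sum_{i\in S,\,i>l}\si_i^2$ in the boundary case. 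In either case $Q_S>0$ strictly when all $\si_i>0$ and $S$ is a genuine cluster (either $|S|\geq 2$ or $S$ contains an interior index approaching $\pa\Si$). This yields the desired contradiction, and unwinding the rescaling actually delivers the quantitative bound $|\nabla^g f_g(x^{(n)})|_g\gtrsim 1/\rho_n\to\infty$.

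The main obstacle is justifying the image-charge decomposition of $G^g$ near $\pa\Si$ with uniform Schauder control of $\hat H$ on the cluster scale. This is done by applying elliptic regularity to the auxiliary function $\hat H := G^g+\frac{1}{2\pi}(\log|y-\eta|+\log|y-\eta^*|)$, whose forcing in the chart is smooth because $\pa_{y_2}\log|y-\eta^*|$ vanishes identically on $\{y_2=0\}$, in accordance with the Neumann condition; this is a routine adaptation of the estimates already carried out in Section~\ref{sec:prelim}.
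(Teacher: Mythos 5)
Your blow-up/Pohozaev argument is genuinely different from the paper's proof and it does work. The paper proceeds by a direct two-case estimation: for a collision ($x^\infty\in\De_X$) it picks the index $i(n)$ with $|y_\xi(x^n_{i(n)})|$ maximal and bounds the radial component of the gradient from below by $c/|y^n_{i(n)}|$; for an interior point approaching $\pa\Si$ it uses the Green representation formula for $\tilde H^g$ to show $\pa_{\zeta_2}R^g(\zeta)\le -\frac{1}{2\pi|y_\xi(\zeta)_2|}$ via explicit boundary integrals, with a further sub-case analysis (comparing $\varrho_n$ with the boundary heights) when several points cluster near $\pa\Si$. In the last sub-case the paper does invoke exactly the image-charge $\cC^1$ bound from Lemma~6 of \cite{yang_zhou2021} that you use, so that ingredient is certainly available and controlled. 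Your argument replaces the paper's cascade of estimates by a single dilation identity. I verified the algebra: in the interior case $\sum_{i\in S}\tilde y_i\cdot V_i=-\tfrac{1}{2\pi}\sum_{i\ne j\in S}\si_i\si_j$, and in the boundary case, with the reflection identity $\tilde y_i\cdot(\tilde y_i-\tilde y_j^*)+\tilde y_j\cdot(\tilde y_j-\tilde y_i^*)=|\tilde y_i-\tilde y_j^*|^2$ (valid because the chart origin lies on $\{y_2=0\}$, so $|\tilde y_j^*|=|\tilde y_j|$) together with the Robin contribution $\tilde y_i\cdot(\tilde y_i-\tilde y_i^*)/|\tilde y_i-\tilde y_i^*|^2=\tfrac12$, one indeed obtains $-\tfrac1{2\pi}Q_S$ with $Q_S=2\bigl(\sum_{i\in S}\si_i\bigr)^2-\sum_{i\in S,i\le l}\si_i^2-2\sum_{i\in S,i>l}\si_i^2 = \sum_{i\in S,i\le l}\si_i^2+2\sum_{i\ne j\in S}\si_i\si_j>0$ for a genuine cluster, as claimed. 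Two small points deserve tightening: the contribution of indices $j\notin S$ is $o(1)$ rather than $\cO(\rho_n)$ (an index $j\notin S$ may still converge to $\bar x_S$ but at a slower rate, so $\rho_n/|y_i-y_j|\to0$ without being $\cO(\rho_n)$), and the cluster extraction should be stated more carefully (pass to a subsequence so that all $\rho_n$-rescaled positions either converge or diverge, and define $S$ as the set whose rescaled positions converge; the definition of $\rho_n$ as a global minimum then guarantees the limit configuration is separated and $\tilde y_{i,2}\ge c>0$ for $i\le l$). What each approach buys: the paper's proof avoids blow-up machinery and gives explicit, chart-level inequalities at the cost of a longer case analysis; your proof unifies all degenerations in one identity, makes the role of the positivity hypothesis $\si_i>0$ transparent through $Q_S>0$, and yields the sharper quantitative bound $|\nabla^g f_g(x^{(n)})|_g\gtrsim 1/\rho_n$.
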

	
	\begin{altproof}{Theorem~\ref{thm:main2}}
		Lemma~\ref{lem:H_INFTY} implies that the set of critical points of $f_g$ is compact. It follows that if $f_g$ is a Morse function, then it has only finitely many critical points, and any $\cC^2$-perturbation of $f_g$ is also a Morse function. Therefore $\mathrm{Riem}_{Morse}^{k+2,\al}(\Si)$ is an open subset of $\mathrm{Riem}^{k+2,\al}(\Si)$. By Theorem~\ref{thm:main1} it is also a dense subset.
	\end{altproof}
	
	\begin{altproof}{Lemma~\ref{lem:H_INFTY}}
		Consider a sequence $x^n \in X\setminus \De_X$ such that
		\[
		x^n \to x^\infty \in \pa(X\setminus \De_X) = \De_X\cup\big((\pa\Si)^l\times(\pa\Si)^{m-l}\big) \subset \Si^l\times(\pa\Si)^{m-l}.
		\]
		{\sc Case 1}. $x^\infty\in\De_X\subset  \intsigma^l\times(\partial\Si)^{m-l}$.\\
		Then, there exists $\xi\in\Si$ such that the set $I:=\big\{i\in\{1,\dots,m\}: x^\infty_i=\xi\big\}$ contains at least two elements. Moreover, if $\xi\in \intsigma$, then $I\subset\{1,\cdots,l\}$; if $\xi\in \partial\Si$, then $I\subset \{l+1,\cdots, m\}$. In either case, we have that
		\[
		\big|\nabla^g_{x_i}R^g(x^n_i)\big| = \cO(1)\text{ and } \big|\nabla^g_{x_i}G^g(x^n_i,x^n_j)\big| = \cO(1)\quad \text{ as } n\to+\infty,
		\] 
		for $i\in I$ and $j\notin I$, and $\big|\nabla^g_{x_i}H^g(x^n_i,x^n_j)\big| = \cO(1)$ for $i,j \in I$. Let $y_\xi:U(\xi)\to B^\xi\subset\R^2$ be the isothermal chart  with $y_{\xi}(\xi)=0$ introduced in Section~\ref{sec:prelim}.   Then for $i\in I$, setting $y^n_j := y_\xi(x^n_j) \in \R^2$ for $j\in I$ and assuming $y^n_i \ne 0$, we obtain as $n\to\infty$:
		\[
		\begin{aligned}
			\big|\nabla^g f_g(x^n)\big|_g
			&\ge \big|\nabla^g_{x_i} f_g(x^n)\big|_g
			= 2\left|\nabla^g_{x_i}\sum_{j\in I\setminus\{i\}}\si_i\si_jG^g(x^n_i,x^n_j)\right|_g + \cO(1)\\
			&= \frac2{\ka(\xi)}\left|\nabla^g_{x_i}\sum_{j\in I\setminus\{i\}}\si_i\si_j\log\big|y_\xi(x^n_i)-y_\xi(x^n_j)\big|\right|_g + \cO(1)\\
			&\ge \frac{2c}{\ka(\xi)}\left|\nabla_{y_i}\sum_{j\in I\setminus\{i\}}\si_i\si_j\log\big|y^n_i-y^n_j\big|\right| + \cO(1)\\
			&= \frac{2c}{\ka(\xi)}\left|\sum_{j\in I\setminus\{i\}}\si_i\si_j\frac{y^n_i-y^n_j}{|y^n_i-y^n_j|^2}\right| + \cO(1)\\
			&\ge \frac{2c}{\ka(\xi)}\left|\sum_{j\in I\setminus\{i\}}\si_i\si_j\left\langle\frac{y^n_i-y^n_j}{|y^n_i-y^n_j|^2},\frac{y^n_i}{|y^n_i|}\right\rangle\right| + \cO(1)\
		\end{aligned}
		\]
		The second inequality is a consequence of the fact that there exists  $c>0$ such that for any function $\mathrm{g}: U(\xi)\to \R$:
		\[
		\big|\nabla^g_x \mathrm{g}(x)\big|_g 
		\ge c\big|\nabla \big(\mathrm{g}\circ y^{-1}_\xi\big)\big(y_{\xi}(x)\big)\big|
		\quad\text{for } x\in U(\xi).
		\]
		Now for $n\in\N$ we choose $i(n)\in I$ with $|y^n_{i(n)}| \ge |y^n_j|$ for all $j\in I$. This implies $y^n_{i(n)} \ne 0$ and
		\[
		\big\langle y^n_{i(n)}-y^n_j,y^n_{i(n)}\big\rangle \ge \frac12\big|y^n_{i(n)}-y^n_j\big|^2 > 0\quad\text{for $j\in I\setminus\{i(n)\}$,}
		\]
		hence
		\[
		\left\langle\frac{y^n_{i(n)}-y^n_j}{|y^n_{i(n)}-y^n_j|^2},\frac{y^n_{i(n)}}{|y^n_{i(n)}|}\right\rangle \ge \frac1{2|y^n_{i(n)}|}
		\quad\text{for $j\in I\setminus\{i(n)\}$.}
		\]
As a consequence we obtain, using $\si_i\si_j>0$ for all $i,j\in I$:
	\[
		\big|\nabla^g f_g(x^n)\big|_g
		\ge \big|\nabla^g_{x_{i(n)}} f_g(x^n)\big|_g 
		\ge \frac{c}{\ka(\xi)}\pi\sum_{j\in I\setminus\{i(n)\}}\si_{i(n)}\si_j\frac1{|y^n_{i(n)}|} + \cO(1) \to \infty \quad\text{as $n\to\infty$.}
	\]
		{\sc Case 2. } $x^{\infty}\notin \De_X$. Then there exists  $i\in \{1,...,l\}$ such that $x^n_i\rightarrow \xi$ for some $\xi\in \partial \Si$. \\
		We fix an isothermal chart $(y_{\xi}, U({\xi}))$ around $\xi$ as introduced in Section~\ref{sec:prelim}. For any $\zeta\in U_{r_{\xi}}({\xi}) $, we decompose  $G^g(\cdot,\zeta)$ as follows: 
		\begin{equation}\label{eq:decompose}
			G^g(\cdot,\zeta)= \tilde{H}^g(\cdot,\zeta)-\frac 1 {\ka(\zeta)}\chi(4|y_{\xi}(\cdot))-y_{\xi}(\zeta)|/r_{\xi})\log {|y_{\xi}(\cdot)-y_{\xi}(\zeta) |}.
		\end{equation}
		Equation \eqref{eq:robin} implies $R^g(\zeta)=\tilde{H}^g(\zeta,\zeta)+ \frac1{2\ka(\zeta)}\vphi_\xi\big(y_\xi(\zeta)\big)$. Let $\pa_1,\pa_2$ be the standard basis of $\R^2$ and define $\pa_{\zeta_1},\pa_{\zeta_2} \in T_{\zeta}\Si$ be the corresponding basis of the tangent space of $\Si$ at $\zeta\in U(\xi)$.
		
		Fix $\zeta\in U_{r_{\xi}}(\xi)\cap \intsigma $ and set $\delta:=y_{\xi}(\zeta)_2>0$. The representation formula for $\tilde{H}^g$ yields:
		\begin{equation}\label{eq:vanish_direction1}
			\begin{aligned}
				&\pa_{\zeta_1} \tilde{H}^g(\eta,\zeta) |_{\eta=\zeta}
				= \frac 1 {|\Si|_g} \int_{\Si}  \pa_{\zeta_1} \tilde{H}^g(\cdot,\zeta)dv_g
				- \int_{\Si}G^g(\cdot,\zeta) \De_g   \pa_{\zeta_1}\tilde{H}^g(\cdot,\zeta) dv_g\\
				&\hspace{4cm}  + \int_{\pa\Si} G^g(\cdot,\zeta)\pa_{\nu_g}   \pa_{\zeta_1} \tilde{H}^g(\cdot,\zeta) ds_g\\
				&\hspace{.5cm}= \frac 1 {2\pi} \int_{\pa\Si} 
				G^g(\zeta,x) \pa_{\nu_g}\pa_{\zeta_1}\left(\chi\left( {4|y_{\xi}(x)-y_{\xi}(\zeta)|} /r_{\xi}\right)\log {|y_{\xi}(x)-y_{\xi}(\zeta) |}\right)ds_g(x)
				+ \cO(1) \\
				&\hspace{.5cm}= - \frac 1 {2\pi}\int_{B^{\xi}\cap \partial\R^2_{+}} 
				G^g\left(\zeta, y_{\xi}^{-1}(y)\right) \chi(4|y-y_{\xi}(\zeta)|/r_{\xi}) \frac{ 2(y_1-y_{\xi}(\zeta)_1)(y_2-y_{\xi}(\zeta)_2)}{ |y-y_{\xi}(\zeta)|^4} dy         
				+ \cO(1)\\
				&\hspace{.5cm}= -\frac 1 {2\pi\delta }\int^{\eps/\delta}_{-\eps/\delta}
				G^g\left(\zeta, y_{\xi}^{-1}\big((\delta  s,0)+ (y_{\xi}(\zeta)_1, 0)\big)\right) \frac{2s}{ (s^2+1)^2} ds+\cO\left(1\right).
			\end{aligned}
		\end{equation} 
		as $\de\to0$. Here $\eps\in (0,\frac{r_{\xi}}{16})$ is chosen sufficiently small. Decomposing $G^g$ as in \eqref{eq:decompose}, we deduce for $\zeta\in U(\xi)$ with $y_{\xi}(\zeta)_2< \frac{r_{\xi}}{16}$:
		\begin{equation*}
			G^g\left(\zeta, y_{\xi}^{-1}\big((\delta  s,0)+ (y_{\xi}(\zeta)_1, 0)\big)\right)
			= 	\tilde{H}^g\left(\zeta, y_{\xi}^{-1}\big((\delta  s,0)+ (y_{\xi}(\zeta)_1, 0)\big)\right) -\frac 1 \pi \log |(\delta s, -\delta)|,
		\end{equation*}
		Now we apply the mean value theorem for $\tilde{H}^g$ and obtain as $\de s\to0$:
		\begin{equation}\label{eq:H^g_expansion}
			\tilde{H}^g\big(\zeta, y_{\xi}^{-1}\big((\delta  s,0)+ (y_{\xi}(\zeta)_1, 0)\big)\big)
			= \tilde{H}^g\left(\zeta,y_{\xi}^{-1}\big(y_{\xi}(\zeta)_1,0\big)\right)
			+ \cO\big(|\delta s|\sup_{x\in \partial\Si} \|\nabla_x\tilde{H}^g(\cdot,x)\|_{\cC(\Si)}\big) 
		\end{equation}
		This implies as $\de\to0$:
		\begin{equation*}
			\begin{aligned}
				&\left|-\frac 1 {2\pi\de}\int^{\eps/\delta}_{-\eps/\de} G^g(\zeta, y_{\xi}^{-1}\big((\de  s,0)+ (y_{\xi}(\zeta)_1, 0)\big) \frac{2s}{ (s^2+1)^2} ds\right|\\
				&\hspace{1cm}\le \left|-\frac 1 {2\pi\de}\tilde{H}^g\big(\zeta,y_{\xi}^{-1}\big(y_{\xi}(\zeta)_1,0\big)\big)
				\int_{|s|\le\eps/\de}\frac{2s}{(s^2+1)^2} ds\right| \\
				&\hspace{2cm}+ \left|\cO\left(\int_{|s|\le \eps/\de} \frac{2s^2}{(s^2+1)^2} ds\right)
				+ \frac {1} {2\pi^2\de}\int_{|s|\le \eps/\de} \log(\de\sqrt{s^2+1}) \frac{2s}{(s^2+1)^2} ds\right|\\
				&\hspace{1cm}\leq  \cO(1).
			\end{aligned}
		\end{equation*}
		Now \eqref{eq:vanish_direction1} yields $\pa_{\zeta_1}\tilde{H}^g(\zeta,\zeta)=\cO(1)$ for $\zeta\in U_{r_{\xi}}(\xi)\cap \pa\Si$. 	Consequently, for $\zeta\in U_{r_{\xi}}(\xi)$ with $y_{\xi}(\zeta)_2< \frac{r_{\xi}}{16}$, we have proven that:
		\begin{equation}\label{eq:H^g_direction_1}
			\pa_{\zeta_1}R^g(\zeta)= \cO(1)\qquad\text{as $|y_{\xi}(\zeta)|\rightarrow 0$.} 
		\end{equation}
		The representation formula of $\tilde{H}^g$ yields for  $\zeta\in  U_{r_{\xi}}(\xi)\cap\intsigma$ as $\de\to0$:
		\begin{equation*}
			\begin{aligned}
				&\pa_{\zeta_2} \tilde{H}^g(\eta,\zeta) |_{\eta=\zeta}
				=\frac 1 {|\Si|_g} \int_{\Si}  \pa_{\zeta_2} \tilde{H}^g(\cdot,\zeta)dv_g
				- \int_{\Si}G^g(\cdot,\zeta) \De_g   \pa_{\zeta_2}\tilde{H}^g(\cdot,\zeta) dv_g\\
				&\hspace{3cm}+ \int_{\pa\Si} G^g(\cdot,\zeta)\pa_{\nu_g}   \pa_{\zeta_2} \tilde{H}^g(\cdot,\zeta)  ds_g\\
				&\hspace{.5cm}= \frac 1 {2\pi} \int_{\pa\Si}  G^g(\zeta,x) \pa_{\nu_g}  \pa_{\zeta_2}\left(\chi\left( {4|y_{\xi}(x)-y_{\xi}(\zeta)|} /r_{\xi}\right)
				\log {|y_{\xi}(x)-y_{\xi}(\zeta) |}\right) ds_g(x)+ \cO(1) \\
				&\hspace{.5cm} \le \frac1{2\pi}\int_{B^{\xi}\cap \pa\R^2_{+}} G^g(\zeta, y_{\xi}^{-1}(y)) \chi(4|y-y_{\xi}(\zeta)|/r_{\xi})
				\frac{(y_1- y_{\xi}(\zeta)_1)^2- (y_2- y_{\xi}(\zeta)_2)^2}{ |y-y_{\xi}(\zeta)|^4} dy+\cO(1)  \\
				&\hspace{.5cm}\le \frac1{2\pi\de}\int^{\eps/\de}_{-\eps/\de}G^g(\zeta, y_{\xi}^{-1}((\de  s,0)+(y_{\xi}(\zeta)_1, 0)) \frac{s^2-1}{ (s^2+1)^2}ds
				+\cO(1) \\
				&\hspace{.5cm} \stackrel{\eqref{eq:H^g_expansion}}{\le}
				\frac1{2\pi\de} \tilde{H}^g(\zeta,y_{\xi}^{-1}(y_{\xi}(\zeta)_1,0))\int_{|s|\leq\eps/\de} \frac{s^2-1}{(s^2+1)^2} ds
				+ \cO\left(1+\int_{|s|\leq \varepsilon/\delta} \frac{|s(s^2-1)|}{(s^2+1)^2} ds\right) \\
				&\hspace{1.5cm}+ \frac {\log (\de^{-1})} {2\pi^2\de}\int_{|s|\le\eps/\de} \frac{s^2-1}{(s^2+1)^2} ds
				+ \frac1{2\pi^2\de}\int_{|s|\le \eps/\de} \log\left(\frac1{\sqrt{s^2+1}}\right)\frac{s^2-1}{(s^2+1)^2} ds +\cO(1)\\
				&\hspace{.5cm}\le \frac1{2\pi^2\de}\int_{|s|\geq 1} \log\left(\frac {\sqrt{s^{-2}+1}} {\sqrt{s^2+1}}\right)\frac{s^2-1}{(s^2+1)^2} ds
				- \frac1{2\pi^2\de}\int_{|s|\ge \eps/\de} \log\left(\frac1{\sqrt{s^2+1}}\right)\frac{s^2-1}{(s^2+1)^2} ds\\
				&\hspace{1.5cm} - \frac {\log(\de^{-1})\eps} {\pi^2(\de^2+\eps^2)}+\cO(\de^{-1/2}) +\cO(\log(\de^{-1})+1) \\
				&\hspace{.5cm}\le - \frac1{2\pi^2\de}\int_{|s|\ge1} \log\left(s^{2}\right)\frac{s^2-1}{(s^2+1)^2} ds + \cO(\de^{-\frac1 2})\le -\frac 1 {4\pi\de}.
			\end{aligned}
		\end{equation*} 
		The last inequality used the identity 
		\[
		\int_{|s|\geq 1} \log(s^2) \frac{s^2-1}{(s^2+1)^2} ds =\pi.
		\]
		From the above estimate we deduce for $\zeta \in \intsigma\cap U_{r_{\xi}}(\xi)$:
		\begin{equation}\label{eq:nagative_R_g}
			\pa_{\zeta_2} R^g(\zeta)\le -\frac1{2\pi |y_{\xi}(\zeta)_2|}\qquad\text{as $|y_{\xi}(\zeta)_2|\to 0$.} 
		\end{equation}
		Now if $I:=\big\{i\in\{1,\dots,m\}: x^n_i\to \xi\big\}$ contains only a single element $i$ then \eqref{eq:nagative_R_g} yields
		\[
		\begin{aligned}
			\big|\nabla^g f_g(x^n)\big|_g
			&\ge \big|\nabla^g_{x_{i}} f_g(x^n)\big|_g
			\ge \big|\si_i^2\nabla^g_{x_{i}} R^g(x_i^n) + 2\si_i\sum_{j\ne i} \nabla^g_{x_{i}} G^g(x^n_i,x^n_j)+ \nabla^g_{x_{i}}h(x^n)\big|_g\\
			&\ge \si_i^2 |\nabla^g_{x_i}R^g(x^n_i)|+\cO(1) 
			\ge  \frac{ c\si_i^2 }{2\pi|y_{\xi}(x^n_i)_2|}+\cO(1)\\
			&\to\infty \qquad\text{as $n\to\infty$.}
		\end{aligned}
		\]
		Here $c>0$ is a constant such that for any function $F:U(\xi)\to \R$:
		\begin{equation}\label{eq:nablaF}
			\big|\nabla^g_x F(x)\big|_g \ge c\big|\nabla \big(F\circ y^{-1}_\xi\big)\big(y_{\xi}(x)\big)\big| \quad\text{for } x\in  U(\xi).
		\end{equation}
		
		Next we  consider the case that $I$ contains at least two elements. Then $\xi^n_i\in U(\xi)$ for $n$ large and $i\in I$. By a direct calculation we obtain for $j\in I\setminus\{i\}$, $\iota=1,2$ and $n\to\infty$:
		\begin{equation}\label{eq:partial_G^g}
			\begin{aligned}
				\pa_{\zeta_\iota} G^g(x^n_j,\zeta) |_{\zeta=x^n_{i}}
				&= \pa_{\zeta_\iota} \tilde{H}^g(x^n_j,\zeta) |_{\zeta=x^n_{i}}\\
				&\hspace{1cm}+ \frac1{\ka(x^n_i)} \log \frac 1 {|y_{\xi}(x^n_j)- y_{\xi}(x_{i}^n)|}
				\pa_{\zeta_\iota}\chi(4|y_{\xi}(x^n_j)- y_{\xi}(\zeta)|/r_{\xi})|_{\zeta=x_{i}^n} \\
				&\hspace{1cm}+ \left.\frac 1 {\ka(x^n_i)} \chi(4|y_{\xi}(x^n_j)- y_{\xi}(x_{i}^n)|/r_{\xi})\pa_{\zeta_\iota}
				\log\frac 1 {|y_{\xi}(x^n_j)- y_{\xi}(\zeta)|}\right|_{\zeta=x_{i}^n}  \\
				&= \pa_{\zeta_\iota} \tilde{H}^g(x^n_j,\zeta) |_{\zeta=x^n_{i}}
				-\frac 1{\ka(x^n_i)}\frac {(y_{\xi}(x_{i}^n)- y_{\xi}(x^n_j) )_{\iota}}{|y_{\xi}(x^n_j) - y_{\xi}(x_{i}^n)|^2} +\cO(1).
			\end{aligned}
		\end{equation}
		For $n\in\N$ we set  
		\[
		\varrho_n:=\max \Big(\big\{  y_{\xi}(x^{n}_i)_2:  i\in I\big\}\cup\big\{ |y_{\xi}(x_{i}^n)_1-y_{\xi}(x^{n}_j)_1|: i,j\in I\text{ with } i\neq j \big\}\Big).
		\]
		If there exists $i(n)\in I$ such that $\varrho_n=y_{\xi}\big(x^n_{i(n)}\big)_2$, then $i(n)\in \{1,\dots,l\}$ satisfies
		\[
		y_{\xi}(x^n_{i(n)})_2-y_{\xi}(x^n_{j})_2\ge 0 \text{ and } y_{\xi}(x^n_{i(n)})_2\ge |(y_{\xi}(x_{i}^n)_1-y_{\xi}(x^{n}_j)_1|
		\quad\text{for every $i,j\in I$.}
		\]
		Given $\zeta=x^n_{i(n)}$ and $\eta= x^n_j$ with $j\in I\cap\{1,\dots,l\}\setminus\{i(n)\})$, we will calculate the upper bound of 
		$\pa_{\zeta_2}\tilde{H}^g(\eta,\zeta)$ as $n\to0$. Setting $a=y_{\xi}(\eta)_2>0$ and $b=y_{\xi}(\zeta)_1-y_{\xi}(\eta)_1$ we have for $|b|\le \varrho_n$ as $n\to\infty$:
		\[
		\begin{aligned}
			&\pa_{\zeta_2} \tilde{H}^g(\eta,\zeta)
			=\frac 1 {|\Si|_g} \int_{\Si}  \pa_{\zeta_2} \tilde{H}^g(\cdot,\zeta)dv_g
			-\int_{\Si}G^g(\cdot,\eta) \De_g \pa_{\zeta_2}\tilde{H}^g(\cdot,\zeta) dv_g\\
			&\hspace{3cm}+ \int_{\pa\Si} G^g(\cdot,\eta)\pa_{\nu_g} \pa_{\zeta_2} \tilde{H}^g(\cdot,\zeta)  ds_g\\
			&\hspace{.5cm}= \cO(1) +\frac1{2\pi} \int_{\pa\Si} G^g(\eta,x) \pa_{\nu_g}\pa_{\zeta_2}
			\big(\chi\big({4|y_{\xi}(x)-y_{\xi}(\zeta)|} /r_{\xi}\big)\log {|y_{\xi}(x)-y_{\xi}(\zeta) |}\big)  ds_g(x)\\
			&\hspace{.5cm}\le \cO(1) + \frac1{2\pi}\int_{B^{\xi}\cap \partial\R^2_{+}} G^g(\eta, y_{\xi}^{-1}(y)) \chi(4|y-y_{\xi}(\zeta)|/r_{\xi})
			\frac{(y_1- y_{\xi}(\zeta)_1)^2-(y_2- y_{\xi}(\zeta)_2)^2}{ |y-y_{\xi}(\zeta)|^4} dy\\
			&\hspace{.5cm}\le \cO(1) +\frac1{2\pi\varrho_n }\int^{\eps/\varrho_n}_{-\eps/\varrho_n}
			G^g(\eta, y_{\xi}^{-1}((\varrho_n  s,0)+ (y_{\xi}(\zeta)_1, 0)) \frac{ s^2-1}{ (s^2+1)^2} ds \\
			&\hspace{.5cm}\stackrel{\eqref{eq:H^g_expansion}}{\le} \frac1{2\pi\varrho_n} \tilde{H}^g(\eta,y_{\xi}^{-1}(y_{\xi}(\zeta)_1,0))
			\int_{|s|\le\eps/\varrho_n} \frac{s^2-1}{(s^2+1)^2} ds
			+ \cO\left(1+\int_{|s|\le \eps/\varrho_n} \frac{|s(s^2-1)|}{(s^2+1)^2} ds\right)\\
			&\hspace{1.5cm}  -\frac {\log(a)} {2\pi^2\varrho_n}\int_{|s|\leq \eps/\varrho_n} \frac{s^2-1}{(s^2+1)^2} ds
			+ \frac1{2\pi^2\varrho_n}
			\int_{|s|\le\eps/\varrho_n} \log\left(\frac1{\sqrt{1+(\varrho_n s+b )^2/a^2}}\right)\frac{s^2-1}{(s^2+1)^2} ds\\
			&\hspace{.5cm}\leq -\frac1{2\pi^2\varrho_n}\int_{|s|\ge \eps/\varrho_n}
			\left(\log (a^{-1})+ \log\left(\frac1{\sqrt{1+(\varrho_n s+b )^2/a^2}}\right)\right)\frac{s^2-1}{(s^2+1)^2} ds\\
			&\hspace{1.5cm} +\frac1{2\pi^2\varrho_n}\int_{|s|\ge1}
			\log\left(\frac {\sqrt{1+(\varrho_n s^{-1}+b )^2/a^2}} {\sqrt{1+(\varrho_n s+b )^2/a^2}}\right)\frac{s^2-1}{(s^2+1)^2} ds
			+\cO\big(1+\log(\varrho_n^{-1})\big)\\
			&\hspace{.5cm}\le \cO\big(\log(\varrho_n^{-1})\big) 
		\end{aligned}
		\]
		Here we used the inequalities:  
		\[
		\log\left(\frac {\sqrt{1+(\varrho_n s^{-1}+b )^2/a^2}} {\sqrt{1+(\varrho_n s+b )^2/a^2}}\right)\leq  0, \text{ for $|b|\leq \varrho_n$, $|s|\ge1$}
		\]
		and 
		\[
		0\le \log \sqrt{a^2+(\varrho_n s+b)^2}\frac{s^2-1}{(s^2+1)^2} \leq C \frac{\varrho_n ^{\frac 12 }}{s^{\frac 3 2}}
		\]
		for some constant $C>0$, any $s\in\{ s\in\R:|s|\ge\eps/\varrho_n,\ a^2+(\varrho_ns+b)^2\ge1\}$. We notice that we have for $j\in I\cap\{l+1,\dots,m\}$:
		\begin{equation}\label{eq:H^G_b_2}
			|\pa_{\zeta_2} \tilde{H}^g(x^n_j,\zeta) |_{\zeta=x^n_{i(n)}}|= |\pa_{\zeta_2} \tilde{H}^g(\zeta,x^n_j) |_{\zeta=x^n_{i(n)}}|\le \cO(1)
			\qquad\text{as $n\to\infty$;}
		\end{equation}  
		and for $j\in I\cap\{1,\dots,l \}\setminus\{ i(n)\}$, $\varrho_n=y_{\xi}(x^n_{i(n)})$:
		\begin{equation}\label{eq:H^G_int_2}
			\big|\pa_{\zeta_2} \tilde{H}^g(x^n_j,\zeta) |_{\zeta=x^n_{i(n)}}\big|\le\cO\big(\log(\varrho_n^{-1})\big) \qquad\text{as $n\to\infty$.}
		\end{equation} 
		Now \eqref{eq:nagative_R_g}-\eqref{eq:H^G_int_2} imply for $n\to\infty$:
		\[
		\begin{aligned}
			|\nabla^g f_g(x^n)|_g
			&\ge \left|\nabla^g_{x_{i(n)}}\left( \si_i^2 R^g(x^n_{i(n)})+2 \si_{i(n)} \sum_{j\ne i(n)} \si_jG^g(x^n_{i(n)}, x^n_j)+ h(x^n)\right)\right|_g\\
			&\ge c \left|\pa_{(x_{i(n)})_2} \left(\si^2_{i(n)}R^g(\xi^n_{i(n)})+ 2\si_{i(n)}\sum_{j\in I\setminus\{i(n)\} }\si_j G^g(\xi_{i(n)}, \xi_j)) \right)\right|
			+\cO(1)\\
			&\ge \cO\big(\log(\varrho_n^{-1})\big) + \si^2_{i(n)} \frac{ c }{2\pi\varrho_n}\\
			&\to \infty.
		\end{aligned}
		\]
		Here $c>0$ is as in \eqref{eq:nablaF}. If $\varrho_n>\max\{y_{\xi}(x^n_i): i\in I\}$, then we  take $i(n)\in I$ such that $y_{\xi}(x^n_{i(n)})_1=\max\{ y_{\xi}(x^n_i): i\in I\}$. The proof of Lemma 6 in~\cite{yang_zhou2021} implies for $x\in U(\xi)$ and $y^*_{\xi}(x):= (y_{\xi}(x)_1, -y_{\xi}(x)_2)$:
		\[
		\left\| G^g(\cdot,x)+\frac1{2\pi}\log\big|y_{\xi}(\cdot)-y_{\xi}(x)\big|
		+\frac 1 {2\pi}\log\big|y_{\xi}(\cdot)-y^{*}_{\xi}(x)\big|\right\|_{\cC^1(U_{r_{\xi}}(\xi))}\le C
		\]
		where $C>0$ depends only on $(\Si,g)$ and on $\xi$. Therefore we have for $j\in I\setminus\{i(n)\}$ as $n\to\infty$:
		\begin{equation}\label{eq:partial_G^g_direct1}
			\pa_{\zeta_1} G(x^n_j, \zeta)\big|_{\zeta=x^n_{i(n)}}
			\le -\frac1{2\pi} \frac{y_{\xi}(x^n_{i(n)})_1- y_{\xi}(x^n_j)_1}{|y_{\xi}(x^n_{i(n)})- y_{\xi}(x^n_j)|^2}
			-\frac1{2\pi} \frac{y_{\xi}(x^n_{i(n)})_1- y_{\xi}(x^n_j)_1}{|y_{\xi}(x^n_{i(n)})- y^*_{\xi}(x^n_j)|^2}+ \cO(1). 
		\end{equation}
		We can take  $i'(n)\in I\setminus\{ i(n)\}$ such that $\varrho_n= y_{\xi}(x^n_{i(n)})_1-y_{\xi}(x^n_{i'(n)})_1$ by the assumption. The inequality \eqref{eq:partial_G^g_direct1} yields
		\begin{equation}\label{eq:eq:partial_G^g_direct1_negative}
			\pa_{\zeta_1} G(x^n_{i'(n)}, \zeta)\big|_{\zeta=x^n_{i(n)}}\le -\frac1{4\pi\varrho_n}\qquad\text{as $n\to\infty$.}
		\end{equation}
		From \eqref{eq:H^g_direction_1} together with \eqref{eq:partial_G^g_direct1} and \eqref{eq:eq:partial_G^g_direct1_negative} we derive the following estimate for the gradient of $f_g$ as $n\to\infty$:
		\[
		\begin{aligned}
			|\nabla^g f_g(x^n)|_g
			&\ge \left|\nabla^g_{x_{i(n)}}\left( \si_i^2 R^g(x^n_{i(n)})+2 \si_{i(n)} \sum_{j\neq i(n)} \si_jG^g(x^n_{i(n)}, x^n_j)+ h(x^n)\right)\right|_g\\
			&\ge c \left|\pa_{(x_{i(n)})_1}
			\left(\sigma^2_{i(n)}R^g(\xi^n_{i(n)})+ 2\sigma_{i(n)}\sum_{j\in I\setminus\{i(n)\} }\sigma_j G^g(\xi_{i(n)}, \xi_j)) \right)\right|+\cO(1)\\
			&\ge \cO(1)+ \si_{i(n)}\si\big(i'(n)\big) \frac{ c }{2 \pi\varrho_n}\\
			&\to \infty 
		\end{aligned}
		\]
		Again $c>0$ is as in \eqref{eq:nablaF}. This completes the proof of Lemma \ref{lem:H_INFTY}.
	\end{altproof}	
	
	\bibliographystyle{acm} 
	\bibliography{Ba-Hu_Morse_final} 
	
	\vspace{2mm}\noindent
	{\sc Thomas Bartsch, Zhengni Hu}\\
	Mathematisches Institut\\
	Universit\"at Giessen\\
	Arndtstr.\ 2\\
	35392 Giessen, Germany\\
	Thomas.Bartsch@math.uni-giessen.de\\
	Zhengni.Hu@math.uni-giessen.de\\
	
\end{document}